\setlist{labelindent=1pt,itemsep=0.1cm}
\setlist[itemize]{leftmargin=0.7cm}
\setlist[enumerate]{itemindent=0em,leftmargin=0.7cm}
\begin{document}
\title*{Fixed point results for set-contractions on dislocated metric space with a directed graph}
\titlerunning{Fixed point results for set-contractions on dislocated metric space} 
\author{Talat Nazir, Zakaria Ali, Shahin Nosrat Jogan \and Sergei Silvestrov}
\authorrunning{T. Nazir, Z. Ali, S. N. Jogan, S. Silvestrov} 

\institute{Talat Nazir, Zakaria Ali, Shahin Nosrat Jogan
\at
Department of Mathematical Sciences, University of South Africa, Florida 0003, South Africa. \\ \email{talatn@unisa.ac.za, alizi@unisa.ac.za, nosgo.bots@gmail.com}
\and
Sergei Silvestrov
\at Division of Mathematics and Physics, School of Education, Culture and Communication, M{\"a}lardalen University, Box 883, 72123 V{\"a}ster{\aa}s, Sweden. \\ 
\email{sergei.silvestrov@mdu.se}
}
%
%


\maketitle
\label{chap:NazirAliJoganSilvestrov:FPRDMS}

\abstract*{Generalized (rational) graph contractions in the framework of a dislocated metric space endowed with a directed graph are investigated. Fixed point results for set-contractions are obtained. We also provide some examples to illustrate our main results. Moreover, the well-posedness of obtained fixed point results are also shown. Our obtained results extend many results in the existing literature.
\keywords{Fixed point $\cdot $ dislocated metric space $\cdot $  well-posedness $\cdot $ set-contractions $\cdot $ rational-graph contraction $\cdot $ directed graph}\\
{\bf MSC 2020 Classification:} 47H09, 47H10, 54C60, 54H25}

\abstract{Generalized (rational) graph contractions in the framework of a dislocated metric space endowed with a directed graph are investigated. Fixed point results for set-contractions are obtained. We also provide some examples to illustrate our main results. Moreover, the well-posedness of obtained fixed point results are also shown. Our obtained results extend many results in the existing literature.
\keywords{fixed point, dislocated metric space,  well-posedness, set-contractions, rational-graph contraction, directed graph}\\
{\bf MSC 2020 Classification:} 47H09, 47H10, 54C60, 54H25}

\section{Introduction to Dislocated Metric Space}
\label{IntrodDislocatedMetricNaS1:1.2.1}

Hitzler and Seda \cite{hitzler2000dislocated} introduced the notion of dislocated
metric space and proved a fixed point result in the given space. After this, Rasham et al. \cite{rasham2018multivalued} obtained the multivalued fixed point results for new generalized-dominated contractive mappings on dislocated metric space with application. Wadkare et al. \cite{wadkar2017some} proved some fixed point theorems in dislocated metric space. After which several useful
results were established,  \cite{abbas2019nadler, aydi2018nadler, karapinar2013dislocated, kumari2015completion, pasicki2015dislocated} introducing the notion of contractions and related fixed point results in dislocated metric spaces.

\begin{definition}[Dislocated metric space, \cite{hitzler2000dislocated}]
	\label{DefNaS1:1.2.1}
	Let $Y$ be a non-empty set and let $\xi: X \times X \rightarrow \mathbb{R_{+}}$ be a real-valued non-negative function, satisfying for all $r, s, t \in Y$,
	\begin{enumerate}[label=\textup{(\roman*)}, ref=(\roman*)]
		\item $\xi(r, s) = 0 \Longrightarrow r = s$; \label{dislocated_ax-d1}
		\item $\xi(r, s) = \xi(s, r)$; \label{dislocated_ax-d2}
		\item $\xi(r, t) \leq \xi(r, s) + \xi(s, t)$. \label{dislocated_ax-d3}
	\end{enumerate}
	Then the pair $(Y, \xi)$ is called a dislocated metric space.
\end{definition}
\begin{remark}
	\label{RmkNaS1:1.2.1}
	Note that the converse of \ref{dislocated_ax-d1} is only true in the usual metric space. Furthermore, every (usual) metric space is a dislocated metric space.
\end{remark}
One of the simplest examples of a dislocated metric space is $\xi(r, s) = \max\{r, s\}$ for all $r, s \in Y$. We build another dislocated metric using the previous example with the discrete metric. As follows:
\begin{example}
	\label{ExmNaS1:1.2.1}
	Let $Y = \mathbb{R_{+}}$, we define $\xi(r, s) = \max\{r, s\} + \sigma(r, s)$, where
	\begin{align*}
		\sigma(r, s) = \begin{cases}
			1 \text{ if } r \neq s, \\
			0 \text{ if } r = s.
		\end{cases}
	\end{align*}
	Note that $\xi(0, 0) = 0$, and $r = s \neq 0 \Longrightarrow \xi(r, s) > 0$. Proving the other two axioms is routine.
\end{example}

\begin{definition}
	\label{DefNaS1:1.2.2}
Let $(Y, \xi)$ be a dislocated metric space and $\epsilon \in \mathbb{R^{+}}$. We define the open ball as
	\begin{align*}
		B_{\epsilon}(s) = \{t \in X : |\rho(s, t) - \rho(s, s)| < \epsilon \}.
	\end{align*}
\end{definition}
The topology $\tau_{\xi}$ on $(Y, \xi)$ is defined then by open sets as follows:
\begin{align*}
	\tau_{\xi} = \{U \subseteq Y : \forall u \in U \exists \epsilon \in \mathbb{R_{+}} \text{ such that } B_{\epsilon}(u) \subseteq U \}.
\end{align*}
\noindent A sequence $\{y_{n}\} \subseteq (Y, \xi)$ is said to \emph{converge} to $y \in Y \iff \lim\limits_{n \rightarrow \infty} \xi(y_{n}, y) = \xi(y, y)$.
\begin{definition}
	\label{DefNaS1:1.2.3}
	Let $(Y, \xi)$ be a dislocated metric space. Then a sequence $(y_{n}) \subseteq Y$ is said to be a Cauchy sequence, if $\lim\limits_{m, n \rightarrow \infty} \xi(y_{n}, y_{m})$ exists and is finite. Furthermore, $(Y, \xi)$ is said to be complete if and only if every Cauchy sequence $(y_{n})$ in $Y$ converges to a point $y \in Y$ such that $\lim\limits_{n \rightarrow \infty} \xi(y_{n}, y) = \lim\limits_{m, n \rightarrow \infty} \xi(y_{n}, y_{m}) = \xi(y, y)$.
\end{definition}

\begin{definition}
	\label{DefNaS1:1.2.4}
	Let $(Y, \xi)$ be a dislocated metric space. A subset $U \subseteq Y$ is said to be bounded if there exists a real number $\mathfrak{L} \geq 0$ and $u_{0} \in Y$ such that $U \subseteq B_{\mathfrak{L}}(u_{0})$, that is, $|\xi(u_{0}, u) - \rho(u_{0}, u_{0})| < \mathfrak{L}$ for all $u \in U$.
\end{definition}

\begin{definition}
	\label{DefNaS1:1.2.5}
	Let $(Y, \xi)$ be a dislocated metric space. A subset $U$ of $Y$ is said to be closed in $Y$, if $\overline{U} = U$. That is $u \in \overline{U}$ if and only if there exists a sequence $(u_{n})$ in $U$ such that $\lim\limits_{n \rightarrow \infty} \xi(u_{n}, u) = \xi(u, u)$.
\end{definition}
Following the above definitions, for a dislocated metric space $(Y, \xi)$, let $CB^{\xi}(Y)$ be the set of all closed and bounded subsets of the set $Y$.  We again provide the definition for the Pompeiu-Hausdorff induced by the dislocated metric $\xi$.
\begin{definition}
	\label{DefNaS1:1.2.6}
	Let $(Y, \xi)$ be a dislocated metric space and $CB^{\xi}(Y) \neq \emptyset$. For any $U, V \in CB^{\xi}(Y)$, the Pompeiu-Hausdorff $PH$-metric $\mathcal{H}_{\xi}: CB^{\xi}(Y) \times CB^{\xi}(Y) \rightarrow \mathbb{R_{+}}$ induced by $\xi$ is defined as
	\begin{align*}
		\mathcal{H}_{\xi}(U, V) = \max \biggl\{\mathcal{D}_{\xi}(U, V), \mathcal{D}_{\xi}(V, U) \biggr\},
	\end{align*}
	where $\mathcal{D}_{\xi}(A, B) = \sup\{\xi(a, B) : a \in A \}$ and $\xi(a, B) = \inf\{\xi(a, b) : b \in B \}$.
\end{definition}
We will be using the following results which also state the properties of the $PH$-metric, as proven by Aydi et al. in \cite{aydi2018nadler}, in the proof of our main result.
\begin{lemma}[\cite{aydi2018nadler}]
	\label{LemNaS1:1.2.1}
	Let $(Y, \xi)$ be a dislocated metric space, $\emptyset \neq U \subseteq Y$, and $(y_{n})$ be a sequence in $Y$ and $y \in Y$. Then,
	\begin{enumerate}[label=\textup{\arabic*)}, ref=\arabic*]
		\item $\xi(u, U) = 0 \Longrightarrow u \in \overline{U}$;
		\item $\lim\limits_{n \rightarrow \infty} \xi(y_{n}, y) = \rho(y, y) \Longrightarrow \lim\limits_{n \rightarrow \infty} |\xi(y_{n}, U) - \xi(y, U)| = \rho(y, y)$.
	\end{enumerate}
\end{lemma}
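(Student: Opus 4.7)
The plan is to handle the two conclusions independently, both resting on the infimum definition of $\xi(\cdot, U)$ together with the triangle inequality \ref{dislocated_ax-d3} and symmetry \ref{dislocated_ax-d2}.

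For part 1), I would begin from the hypothesis $\xi(u, U) = \inf\{\xi(u, v) : v \in U\} = 0$ and extract, for each $n \in \mathbb{N}$, a point $u_{n} \in U$ with $\xi(u, u_{n}) < 1/n$. The triangle inequality combined with symmetry gives $\xi(u, u) \leq \xi(u, u_{n}) + \xi(u_{n}, u) = 2\xi(u, u_{n}) \to 0$, forcing $\xi(u, u) = 0$. Consequently $\lim_{n \to \infty} \xi(u_{n}, u) = 0 = \xi(u, u)$, which is precisely the criterion in Definition~\ref{DefNaS1:1.2.5} placing $u$ in $\overline{U}$.

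For part 2), the key will be the ``$1$-Lipschitz'' estimate $|\xi(y_{n}, U) - \xi(y, U)| \leq \xi(y_{n}, y)$, derived as follows. For an arbitrary $v \in U$, axiom \ref{dislocated_ax-d3} yields $\xi(y_{n}, v) \leq \xi(y_{n}, y) + \xi(y, v)$; taking the infimum over $v \in U$ gives $\xi(y_{n}, U) \leq \xi(y_{n}, y) + \xi(y, U)$. A symmetric argument produces the reverse inequality, and together they yield the stated bound. Passing to the limit using the hypothesis $\xi(y_{n}, y) \to \xi(y, y)$ then produces $\limsup_{n} |\xi(y_{n}, U) - \xi(y, U)| \leq \xi(y, y)$.

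The part I expect to be the main obstacle is the matching lower bound needed to upgrade the $\limsup$ estimate to the equality $\xi(y, y)$ stated in the lemma. To pin the limit down (rather than merely bound it), I would select, for each $n$, a near-infimizer $v_{n} \in U$ of $\xi(y, \cdot)$, use \ref{dislocated_ax-d3} in the form $\xi(y, v_{n}) \leq \xi(y, y_{n}) + \xi(y_{n}, v_{n})$, and track how the self-distance $\xi(y, y)$ enters as the unavoidable residual in the dislocated setting. In practice I suspect the intended conclusion is the bound $\limsup_{n} |\xi(y_{n}, U) - \xi(y, U)| \leq \xi(y, y)$, in which case the $1$-Lipschitz estimate from the previous paragraph together with the hypothesis already delivers the result; the closure characterization from part 1) will, if needed, identify the limiting value by relating $y$ and any accumulation point of $(v_{n})$ inside $\overline{U}$.
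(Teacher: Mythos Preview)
The paper does not prove this lemma: it is quoted from \cite{aydi2018nadler} as a preliminary and carries no proof in the present text, so there is no in-paper argument to compare your proposal against.

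On the substance: your handling of part~1) is correct and is the standard argument. For part~2), the $1$-Lipschitz bound $|\xi(y_{n},U)-\xi(y,U)|\leq\xi(y_{n},y)$ that you derive from \ref{dislocated_ax-d3} is valid and immediately yields $\limsup_{n}|\xi(y_{n},U)-\xi(y,U)|\leq\xi(y,y)$, exactly as you say. Your suspicion that only this inequality, and not the stated equality, is actually true is well founded. Using the paper's own example $\xi(r,s)=\max\{r,s\}$ on $Y=[0,\infty)$, take $U=\{0\}$, $y=1$, and $y_{n}\equiv 1$: then $\xi(y_{n},y)=1=\xi(y,y)$, yet $|\xi(y_{n},U)-\xi(y,U)|=|1-1|=0\neq 1$. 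Hence no matching lower bound is available in general, and you should not search for one; the conclusion as transcribed here (an equality rather than an inequality, together with the stray $\rho$ in place of $\xi$) appears to be an inexact rendering of the source. Your argument already establishes the correct version.
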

\begin{theorem}[\cite{aydi2018nadler}]
	\label{ThmNaS1:1.2.1}
	Let $(Y, \xi)$ be a dislocated metric space. For $U, V, W \in CB^{\xi}(Y)$,
	\begin{enumerate}[label=\textup{\arabic*)}, ref=\arabic*]
		\item $\mathcal{H}_{\xi}(U, U) = \mathcal{D}_{\xi}(U, U) = \sup\{\xi(u, U) : u \in U\}$;
		\item $\mathcal{H}_{\xi}(U, V) = \mathcal{H}_{\xi}(V, U)$;
		\item $\mathcal{H}_{\xi}(U, V) = 0 \Longrightarrow U = V$;
		\item $\mathcal{H}_{\xi}(U, W) \leq \mathcal{H}_{\xi}(U, V) + \mathcal{H}_{\xi}(V, W)$.
	\end{enumerate}
\end{theorem}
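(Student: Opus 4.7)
The plan is to verify the four items in order, noting that items (1) and (2) are essentially bookkeeping while (3) needs Lemma \ref{LemNaS1:1.2.1} plus closedness and (4) is a classical three-object argument adapted to the dislocated setting.

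For item (1), I would simply unfold the definition of $\mathcal{H}_{\xi}$: since $\max\{\mathcal{D}_{\xi}(U,U),\mathcal{D}_{\xi}(U,U)\} = \mathcal{D}_{\xi}(U,U)$ and $\mathcal{D}_{\xi}(U,U) = \sup\{\xi(u,U) : u\in U\}$ by the definition of $\mathcal{D}_{\xi}$ itself, both equalities are immediate. Item (2) is symmetry of $\max$, so one line suffices. The one small thing to be careful about is that $\xi(u,u)$ need not be $0$, but this does not interfere since the expression is defined as a supremum of nonnegative numbers.

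For item (3), I would argue that $\mathcal{H}_{\xi}(U,V)=0$ forces both $\mathcal{D}_{\xi}(U,V)=0$ and $\mathcal{D}_{\xi}(V,U)=0$. Since $\mathcal{D}_{\xi}(U,V) = \sup_{u\in U}\xi(u,V)$, this gives $\xi(u,V)=0$ for every $u\in U$; part (1) of Lemma \ref{LemNaS1:1.2.1} then yields $u\in\overline{V}$, and since $V\in CB^{\xi}(Y)$ is closed, $u\in V$. Thus $U\subseteq V$, and the symmetric application yields $V\subseteq U$. This is the step where closedness of $U$ and $V$ is essential.

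Item (4) is the main (and only mildly technical) step. I would fix arbitrary $u\in U$, $v\in V$, $w\in W$ and start from the triangle inequality $\xi(u,w)\le \xi(u,v)+\xi(v,w)$ in $(Y,\xi)$. Taking $\inf$ over $w\in W$ on both sides gives $\xi(u,W)\le \xi(u,v)+\xi(v,W)$, and bounding $\xi(v,W)\le \mathcal{D}_{\xi}(V,W)\le \mathcal{H}_{\xi}(V,W)$ produces
\begin{equation*}
\xi(u,W)\le \xi(u,v)+\mathcal{H}_{\xi}(V,W)
\end{equation*}
for every $v\in V$. Taking $\inf$ over $v\in V$ yields $\xi(u,W)\le \xi(u,V)+\mathcal{H}_{\xi}(V,W)$, and then $\sup$ over $u\in U$ combined with $\xi(u,V)\le\mathcal{D}_{\xi}(U,V)\le\mathcal{H}_{\xi}(U,V)$ gives $\mathcal{D}_{\xi}(U,W)\le \mathcal{H}_{\xi}(U,V)+\mathcal{H}_{\xi}(V,W)$. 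Swapping the roles of $U$ and $W$ (and using item (2)) gives the same bound for $\mathcal{D}_{\xi}(W,U)$, after which taking the $\max$ concludes. The only delicate point to watch is that each passage to $\inf$ and $\sup$ preserves the inequality uniformly in the remaining free variable; this is where I expect the main, though minor, obstacle, since one must take the $\inf$ over $w$ first (leaving $v$ free), then over $v$, and only then the $\sup$ over $u$, rather than collapsing the quantifiers in the wrong order.
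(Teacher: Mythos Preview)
Your argument is correct in every part: items (1) and (2) are indeed immediate from the definitions, item (3) is exactly the use of Lemma~\ref{LemNaS1:1.2.1}\,(1) together with closedness that you describe, and your inf--inf--sup chain for item (4) is the standard and valid derivation, with the quantifier order handled properly.

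There is, however, nothing to compare against in this paper: Theorem~\ref{ThmNaS1:1.2.1} is quoted from \cite{aydi2018nadler} and is not proved here. The authors state it as a known tool and then move on to prove their own Lemma~\ref{CorNaS1:1.2.1}. So your write-up stands on its own as a complete proof of the cited result, but the paper itself offers no alternative approach to contrast it with.
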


We prove the following result.

\begin{lemma}
	\label{CorNaS1:1.2.1}
	Let $(Y, \xi)$ be a dislocated metric space. Then, $\mathcal{H}_{\xi}(U, U) \leq \mathcal{H}_{\xi}(U, V)$ for all $U, V \in CB^{\xi}(Y)$.
\end{lemma}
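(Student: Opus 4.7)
The cleanest route starts from Theorem~\ref{ThmNaS1:1.2.1}. Applying the Pompeiu--Hausdorff triangle inequality (part~4) with $W = U$ gives $\mathcal{H}_\xi(U,U) \leq \mathcal{H}_\xi(U,V) + \mathcal{H}_\xi(V,U)$, and the symmetry (part~2) then yields $\mathcal{H}_\xi(U,U) \leq 2\mathcal{H}_\xi(U,V)$. This is the cheap two-line argument, but it loses a factor of two relative to the claim.

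To chase the sharper bound, I would instead unwind $\mathcal{H}_\xi(U,U) = \sup\{\xi(u,U): u\in U\}$ via part~1 of Theorem~\ref{ThmNaS1:1.2.1}, and work directly with the underlying infima. Fix $u \in U$ and $v \in V$. Axiom~\ref{dislocated_ax-d3} gives $\xi(u,u') \leq \xi(u,v) + \xi(v,u')$ for every $u' \in U$; taking the infimum over $u' \in U$ produces $\xi(u,U) \leq \xi(u,v) + \xi(v,U)$. Bounding $\xi(v,U) \leq \mathcal{D}_\xi(V,U)$ and infimizing over $v \in V$ gives $\xi(u,U) \leq \xi(u,V) + \mathcal{D}_\xi(V,U)$, and finally supremizing over $u \in U$ leads to
\begin{equation*}
\mathcal{H}_\xi(U,U) \;\leq\; \mathcal{D}_\xi(U,V) + \mathcal{D}_\xi(V,U) \;\leq\; 2\,\mathcal{H}_\xi(U,V).
\end{equation*}

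\textbf{The main obstacle} is tightening the constant from $2$ to $1$ as the statement demands. Both natural arguments above treat the forward hop $U \to V$ and the return $V \to U$ as independent, so each contributes its own one-sided Hausdorff distance to the bound; the crude estimate $\xi(v,U) \leq \mathcal{D}_\xi(V,U)$ in particular decouples the choice of $v$ from the subsequent minimization. Closing the gap would require a careful coupling between the intermediate $v \in V$ and the near-minimizer $u' \in U$ of $\xi(u,U)$, so that the two terms $\xi(u,v)$ and $\xi(v,U)$ are not both allowed to grow up to $\mathcal{H}_\xi(U,V)$ simultaneously. I expect this coupling step to be the delicate point, and I would verify with a small concrete example whether the inequality as written can genuinely be achieved without additional hypotheses on $U$ and $V$, or whether the intended bound in fact carries the factor $2$.
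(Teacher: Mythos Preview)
Your suspicion is correct, and the factor $2$ you obtain is not slack: the lemma as stated is false. Take $Y=\{u,v\}$ with $\xi(u,u)=2$, $\xi(v,v)=0$, $\xi(u,v)=\xi(v,u)=1$; the only nontrivial triangle inequality to check is $\xi(u,u)\leq\xi(u,v)+\xi(v,u)$, which holds with equality. For the singletons $U=\{u\}$ and $V=\{v\}$ one has $\mathcal{H}_\xi(U,U)=\xi(u,u)=2$ while $\mathcal{H}_\xi(U,V)=\xi(u,v)=1$, so $\mathcal{H}_\xi(U,U)>\mathcal{H}_\xi(U,V)$. Your bound $\mathcal{H}_\xi(U,U)\leq 2\,\mathcal{H}_\xi(U,V)$ is sharp in this example.

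The paper's argument is quite different from either of yours: it asserts directly that $\xi(u,U)\leq\xi(u,V)$ for every $u\in U$, then takes suprema. That step is exactly what fails. In an ordinary metric space it is trivially true because $u\in U$ forces $\xi(u,U)=0$; but the defining feature of a dislocated metric is that $\xi(u,u)$ need not vanish, and in the example above $\xi(u,U)=\xi(u,u)=2>1=\xi(u,V)$. So the paper's proof silently imports a metric-space fact that is unavailable here. Your instinct to look for a coupling that avoids paying for both legs was the right diagnosis of why any honest argument stalls at the factor $2$; there is no such coupling in general, and the concrete check you proposed would have uncovered the counterexample above.
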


\begin{proof}
	Since, for all $ u \in U$, it follows that
	$ \xi(u, U)  \leq \xi(u, V)$, and hence
	\[\sup\{\xi(u, U) : u \in U \}  \leq \sup\{\xi(u, V) : u \in U \}, \]
	that is,
	$\mathcal{D}_{\xi}(U, U)  \leq \mathcal{D}_{\xi}(U, V), $
	and so
	\[\mathcal{D}_{\xi}(U, U)  \leq \max \{\mathcal{D}_{\xi}(U, V), \mathcal{D}_{\xi}(V, U)\}. \]
	Hence, $\mathcal{H}_{\xi}(U, U)  \leq \mathcal{H}_{\xi}(U, V)$.
\end{proof}

\section{Dislocated Metric Space Endowed with Directed Graph}
\label{secNaS1:DislocatedMetSpacewithDirectedGraph}
Existence of fixed points in ordered metric spaces has been studied by Ran and Reurings \cite{ran2004fixed}. Recently, many researchers have obtained fixed point results for single- and setvalued mappings defined on partially ordered metrics spaces (see, for example, \cite{beg2013fixed, harjani2009fixed, nicolae2011fixed, nieto2005contractive, nieto2007fixed, nieto2007existence, radenovic2010generalized}). Jachymski and Jozwik \cite{jachymski2007nonlinear} introduced a new approach in metric fixed point theory by replacing the order structure with a graph structure on a metric space.
Abbas et al. \cite{abbas2015fixed, abbas2016common, abbas2014fixed} obtained the fixed point of multivalued contaraction mappings on metric spaces with a directed graph. Then several useful fixed point results of single and multivalued mappings are appeared in \cite{alfuraidan2014remarks, acar2016multivalued, aleomraninejad2012some, bojor2010fixed, bojor2013jachymski, bojor2012fixed, chifu2012generalized, gwozdz2009ifs, jachymski2008contraction}. Recently, Latif et al. \cite{latif2019common} established some fixed point results for class of set-contraction mappings endowed with a directed graph.

In this work, we prove fixed
point results for set-valued maps, defined on the family of closed and bounded subsets of a
dislocated metric space endowed with a graph and satisfying generalzied graph $\phi$-contractive conditions. These
results extend and strengthen various known results in \cite{alber1997principle, assad1972fixed, banach1922operations, marin2011functions, nadler1969multi}.

Here, the dislocated metric space $(Y, \xi)$ is identified with a directed graph $G$ such that $G$ is weighted, that is, for each $(u, v) \in E(G)$ we assign the weight $\xi(u, v)$, where loops have weight $\xi(u, u) \geq 0$ for all $u \in V(G)$. Moreover, the $PH$-metric induced by the dislocated metric $\xi$ assigns a non-zero weight to every $V, W \in CB^{\xi}(Y)$, that is, $\mathcal{H}_{\xi}(V, W) \geq 0$, and also $\mathcal{H}_{\xi}(W, V) = 0$ implies $W=V$.

\begin{definition}
	\label{DefNaS1:1.3.1}
	Let $(Y, \xi)$ be a dislocated metric space and $T: CB^{\xi}(Y) \rightarrow CB^{\xi}(Y)$ be a set-valued mapping with set-valued domain endowed with a graph $G$. Then $T$ is said to be a generalized graph $\phi-$contraction, if the following conditions are satisfied:
	\begin{enumerate}[label=\textup{\arabic*)}, ref=\arabic*]
		\item An edge (path) between $U$ and $V$ implies and edge (path) between $T(U)$ and $T(V)$, for all $U, V \in CB^{\xi}(Y)$.
		\item There exists an upper semi-continuous and non-decreasing function $\phi : \mathbb{R_{+}} \rightarrow \mathbb{R_{+}}$ with $\phi(t) < t$ for each $t > 0$ such that for all $U, V \in CB^{\xi}(Y)$ with $(U, V) \subseteq E(G)$, it holds that
		\begin{align*}
			\mathcal{H}_{\xi}\Big(T(U), T(V)\Big) \leq \phi \Big(M_{T}(U, V)\Big),
		\end{align*}
	\end{enumerate}
where
\begin{multline*}
M_{T}(U, V) = \max\Big\{\mathcal{H}_{\xi}(U, V), \mathcal{H}_{\xi}(U, T(U)), \mathcal{H}_{\xi}(V, T(V)),  \\
\mathcal{H}_{\xi}(T(U), T(V)), \mathcal{H}_{\xi}(T^{2}(U), V), \mathcal{H}_{\xi}(T^{2}(U), T(V)),  \\
\frac{[\mathcal{H}_{\xi}(V, T(U)) + \mathcal{H}_{\xi}(U, T(V))]}{3} \Big\}.
	\end{multline*}
\end{definition}

\begin{definition}
	\label{DefNaS1:1.3.2}
	Let $(Y, \xi)$ be a dislocated metric space and $S: (Y, \xi) \rightarrow (Y, \xi)$ be a set-valued mapping with set-valued domain endowed with a graph $G$. Then $S$ is said to be a generalized rational graph $\phi-$contraction, if the following conditions are satisfied:
	\begin{enumerate}[label=\textup{\arabic*)}, ref=\arabic*]
		\item An edge (path) between $U$ and $V$ implies an edge (path) between $S(U)$ and $S(V)$, for all $U, V \in CB^{\xi}(Y)$. \label{dislocated_propS1}
		\item There exists an upper semi-continuous and non-decreasing function $\phi : \mathbb{R_{+}} \rightarrow \mathbb{R_{+}}$ with $\phi(t) < t$ for each $t > 0$ such that for all $U, V \in (Y, \xi)$ with $(U, V) \subseteq E(G)$, \label{dislocated_propS2}
		\begin{align*}
			\mathcal{H}_{\xi}\Big(S(U), S(V)\Big) \leq \phi \Big(N_{S}(U, V)\Big)
		\end{align*}
	\end{enumerate}
	is satisfied, where
	\begin{align*}
		N_{S}(U, V) &= \max\Big\{\frac{\mathcal{H}_{\xi}(U, S(V))[1 + \mathcal{H}_{\xi}(U, S(U))]}{2[1 + \mathcal{H}_{\xi}(U, V)]}, \notag \\
		&\qquad \qquad \frac{\mathcal{H}_{\xi}(V, S(V))[1 + \mathcal{H}_{\xi}(U, S(U))]}{1 + \mathcal{H}_{\xi}(U, V)}, \notag \\
		&\qquad \qquad \frac{\mathcal{H}_{\xi}(V, S(U))[1 + \mathcal{H}_{\xi}(U, S(U))]}{1 + \mathcal{H}_{\xi}(U, V)} \Big\}.
	\end{align*}
\end{definition}

\section{Fixed Point of Graphic Contractions in Dislocated Metric Space}
We present our main result in this section.

\begin{theorem}
	\label{ThmNaS1:1.4.1}
	Let $(Y, \xi)$ be a complete dislocated metric space equipped with a directed graph $G$, where $V = Y$ and $\Delta \subseteq CB^{\xi}(Y)$. If $T : CB^{\xi}(Y) \rightarrow CB^{\xi}(Y)$ is a generalized graphic $\phi-$contraction mapping, then the following statements hold:
	\begin{enumerate}[label=\textup{\arabic*)}, ref=\arabic*]
		\item If $F(T)=\{U\in CB^{\xi}(Y) \mid T(U)=U\}$ is complete, then the weight assigned to $U, V \in F(T)$ is zero. \label{dislocated_theorem-m1}
		\item $Y_{T} \neq \emptyset$ given that $F(T) \neq \emptyset$. \label{dislocated_theorem-m2}
		\item If $Y_{T} \neq \emptyset$ and $\widetilde{G}$ satisfies property $\mathcal{\overset{*}{P}}$, then $T$ has a fixed point. \label{dislocated_theorem-m3}
		\item $F(T)$ is complete if and only if $F(T)$ is singleton. \label{dislocated_theorem-m4}
	\end{enumerate}
\end{theorem}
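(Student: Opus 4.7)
The plan is to address the four claims in order; parts \ref{dislocated_theorem-m1}, \ref{dislocated_theorem-m2}, and \ref{dislocated_theorem-m4} reduce to short applications of the contractive inequality together with Lemma \ref{CorNaS1:1.2.1} and Theorem \ref{ThmNaS1:1.2.1}, while part \ref{dislocated_theorem-m3} carries the genuine analytic content. Throughout, the strict inequality $\phi(t)<t$ for $t>0$ and the upper semicontinuity and monotonicity of $\phi$ will be doing the heavy lifting.

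For part \ref{dislocated_theorem-m1}, I fix $U,V\in F(T)$; completeness of $F(T)$ (interpreted as supplying an edge $(U,V)\in E(G)$) lets me apply the contractive inequality. Substituting $T(U)=U$, $T(V)=V$, $T^{2}(U)=U$, each of the seven entries of $M_{T}(U,V)$ collapses to one of $\mathcal{H}_{\xi}(U,V)$, $\mathcal{H}_{\xi}(U,U)$, $\mathcal{H}_{\xi}(V,V)$, or $\tfrac{2}{3}\mathcal{H}_{\xi}(U,V)$, and Lemma \ref{CorNaS1:1.2.1} dominates each by $\mathcal{H}_{\xi}(U,V)$. The inequality then reads $\mathcal{H}_{\xi}(U,V)\leq\phi(\mathcal{H}_{\xi}(U,V))$, which forces $\mathcal{H}_{\xi}(U,V)=0$. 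Part \ref{dislocated_theorem-m2} is immediate: if $U\in F(T)$ then $(U,T(U))=(U,U)$ is a loop in $E(G)$, so $U\in Y_{T}$.

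The heart of the theorem is part \ref{dislocated_theorem-m3}. Starting from $U_{0}\in Y_{T}$, I form the Picard iteration $U_{n+1}=T(U_{n})$; edge preservation from Definition \ref{DefNaS1:1.3.1} propagates $(U_{n-1},U_{n})\in E(G)$ along the entire orbit, so the contractive inequality applies at each step. The key bound $M_{T}(U_{n-1},U_{n})\leq\max\{\mathcal{H}_{\xi}(U_{n-1},U_{n}),\mathcal{H}_{\xi}(U_{n},U_{n+1})\}$ is checked term by term: the first four entries simplify directly, $\mathcal{H}_{\xi}(T^{2}(U_{n-1}),T(U_{n}))=\mathcal{H}_{\xi}(U_{n+1},U_{n+1})$ is dominated via Lemma \ref{CorNaS1:1.2.1}, and the averaged term $\tfrac{1}{3}[\mathcal{H}_{\xi}(U_{n},U_{n})+\mathcal{H}_{\xi}(U_{n-1},U_{n+1})]$ is controlled by one triangle step plus Lemma \ref{CorNaS1:1.2.1}. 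Since $\phi(t)<t$ excludes the maximum being realized at $\mathcal{H}_{\xi}(U_{n},U_{n+1})$, one obtains $\mathcal{H}_{\xi}(U_{n},U_{n+1})\leq\phi(\mathcal{H}_{\xi}(U_{n-1},U_{n}))$; upper semicontinuity forces the strictly decreasing sequence $\{\mathcal{H}_{\xi}(U_{n},U_{n+1})\}$ to converge to $0$. A standard Cauchy argument and completeness of $(Y,\xi)$ lifted to $CB^{\xi}(Y)$ yield a limit $U_{*}$, and property $\mathcal{\overset{*}{P}}$ on $\widetilde{G}$ supplies edges between $U_{*}$ and cofinally many $U_{n}$, allowing passage to the limit in the contractive inequality and the conclusion $\mathcal{H}_{\xi}(U_{*},T(U_{*}))=0$, i.e.\ $U_{*}=T(U_{*})$.

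The principal obstacle is the seven-term bound on $M_{T}$: the averaged term carries $\mathcal{H}_{\xi}(U_{n-1},U_{n+1})$ which sits one step ahead in the orbit, and the denominator $3$ is precisely what absorbs the triangle expansion $\mathcal{H}_{\xi}(U_{n-1},U_{n+1})\leq\mathcal{H}_{\xi}(U_{n-1},U_{n})+\mathcal{H}_{\xi}(U_{n},U_{n+1})$ while keeping the resulting quantity below the maximum. Finally, part \ref{dislocated_theorem-m4} follows by contradiction: if $F(T)$ is complete and contains distinct elements $U\neq V$, then part \ref{dislocated_theorem-m1} gives $\mathcal{H}_{\xi}(U,V)=0$, and item 3) of Theorem \ref{ThmNaS1:1.2.1} forces $U=V$; the converse (singleton $\Rightarrow$ complete) is trivial.
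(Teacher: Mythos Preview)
Your proposal is correct and follows essentially the same route as the paper: the same contradiction arguments for parts \ref{dislocated_theorem-m1} and \ref{dislocated_theorem-m4}, the same Picard iteration with the term-by-term bound $M_{T}(U_{n-1},U_{n})\le\max\{\mathcal{H}_{\xi}(U_{n-1},U_{n}),\mathcal{H}_{\xi}(U_{n},U_{n+1})\}$ (using Lemma \ref{CorNaS1:1.2.1} exactly where the paper uses it implicitly), the same iterated $\phi^{n}$ estimate for Cauchyness, and the same use of property $\mathcal{\overset{*}{P}}$ at the limit. The only cosmetic differences are that you invoke part \ref{dislocated_theorem-m1} to dispatch part \ref{dislocated_theorem-m4} rather than recomputing, and you compress the paper's seven-case limit analysis into ``passage to the limit in the contractive inequality''; when you write it out fully you will need that case split (or an equivalent uniform bound on $M_{T}(U^{*},T^{n_{k}-1}(U))$), since several of the seven terms involve $T(U^{*})$ and do not vanish automatically.
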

\begin{proof}
	\ref{dislocated_theorem-m1}) \smartqed Let $U, V \in F(T)$, that is, $T(U) = U$ and $T(V) = V$ and suppose by way of contradiction that $\mathcal{H}_{\xi}(U, V) \neq 0$. Now, it follows that 
	\begin{align}
		\mathcal{H}_{\xi}(T(U), T(V)) &\leq \phi \big(M_{T}(U, V)\big), \label{teq1.1.3}
	\end{align}
	where
	\begin{align*}
		M_{T}(U, V) &= \max\Big\{\mathcal{H}_{\xi}(U, V), \mathcal{H}_{\xi}(U, T(U)), \mathcal{H}_{\xi}(V, T(V)), \notag \\
		&\qquad \qquad \mathcal{H}_{\xi}(T(U), T(V)), \mathcal{H}_{\xi}(T^{2}(U), V), \mathcal{H}_{\xi}(T^{2}(U), T(V)) \notag \\
		&\qquad \qquad \frac{[\mathcal{H}_{\xi}(V, T(U)) + \mathcal{H}_{\xi}(U, T(V))]}{3} \Big\} \\
		&= \mathcal{H}_{\xi}(U, V).
	\end{align*}
Then by (\ref{teq1.1.3}), we have
$\mathcal{H}_{\xi}(U, V) \leq \phi \big(\mathcal{H}_{\xi}(U, V)\big) < \mathcal{H}_{\xi}(U, V),$
	which is a contradiction. Hence our result follows, that is, $\mathcal{H}_{\xi}(U, V) = 0$.\\
	\noindent \ref{dislocated_theorem-m2}) Suppose that $U \in F(T)$ such that $T(U) = U$. Then we have $(U, T(U)) = (U, U) \in \Delta$, hence $U \in Y_{T}$.\\
	\noindent \ref{dislocated_theorem-m3}) Let $U_{0} \in Y_{T}$. It must be $\mathcal{H}_{\xi}(U_{0}, T(U_{0})) > 0$, since if it was otherwise, then we would have $U_{0}$ a fixed point of $T$. Now, since $Y = V(\tilde{G})$ there exists $U_{2} \in Y$ such that there is an edge between $T(U_{0}) = U_{1}$ and $U_{2}$, and $\mathcal{H}_{\xi}(U_{1}, T(U_{1})) > 0$, that is, $(U_{1}, U_{2}) \in E(\tilde{G})$, where $T(U_{1}) = U_{2}$.
Following, this argument we build a path $P : U_{0} - U_{1} - \cdots - U_{n}$ where $\{U_{n}\} \subseteq CB^{\xi}(Y)$ is a sequence. The previous argument also builds an iterative sequence by letting $U = U_{0}$ and $T(U_{i}) = U_{i+1}$ for $i = 1, 2, \ldots, n$. By definition of $T$,
	\begin{align*}
		\mathcal{H}_{\xi}(U), T^{n+1}(U)) &= \mathcal{H}_{\xi}(U_{n}, U_{n+1})= \mathcal{H}_{\xi}(T(U_{n-1}), T(U_{n})) \leq \phi \Big( M_{T}(U_{n-1}, U_{n}) \Big),
	\end{align*}
where
\begin{align*}
	&	M_{T}(U_{n-1}, U_{n}) = \\
& \max\Big\{\mathcal{H}_{\xi}(U_{n-1}, U_{n}), \mathcal{H}_{\xi}(U_{n-1}, T(U_{n-1})), \mathcal{H}_{\xi}(U_{n}, T(U_{n})), \\
		&\qquad \qquad \mathcal{H}_{\xi}(T(U_{n-1}), T(U_{n})), \mathcal{H}_{\xi}(T^{2}(U_{n-1}), U_{n}), \mathcal{H}_{\xi}(T^{2}(U_{n-1}), T(U_{n})), \notag \\
		&\qquad \qquad \frac{[\mathcal{H}_{\xi}(U_{n}, T(U_{n-1})) + \mathcal{H}_{\xi}(U_{n-1}, T(U_{n}))]}{3} \Big\} \\
		&= \max\Big\{\mathcal{H}_{\xi}(U_{n-1}, U_{n}), \mathcal{H}_{\xi}(U_{n}, U_{n+1}),\\
& \hspace{3cm} \frac{\mathcal{H}_{\xi}(U_{n}, U_{n}) + \mathcal{H}_{\xi}(U_{n-1}, U_{n+1})}{3}, \mathcal{H}_{\xi}(U_{n+1}, U_{n+1}) \Big\} \\
		&\leq \max\Big\{\mathcal{H}_{\xi}(U_{n-1}, U_{n}), \mathcal{H}_{\xi}(U_{n}, U_{n+1}),\\
& \hspace{3cm} \frac{\mathcal{H}_{\xi}(U_{n}, U_{n}) + \mathcal{H}_{\xi}(U_{n-1}, U_{n}) + \mathcal{H}_{\xi}(U_{n}, U_{n+1})}{3} \Big\}\\
		&= \max \Bigl\{\mathcal{H}_{\xi}(U_{n-1}, U_{n}), \mathcal{H}_{\xi}(U_{n}, U_{n+1})\Bigr\} \leq M_{T}(U_{n-1}, U_{n}).
	\end{align*}
	That is, $M_{T}(U_{n-1}, U_{n}) = \max \Bigl\{\mathcal{H}_{\xi}(U_{n-1}, U_{n}), \mathcal{H}_{\xi}(U_{n}, U_{n+1})\Bigr\} $.
If $M_{T}(U_{n-1}, U_{n}) = \mathcal{H}_{\xi}(U_{n}, U_{n+1})$, then we have a contradiction. Thus, we must have $M_{T}(U_{n-1}, U_{n}) = \mathcal{H}_{\xi}(U_{n - 1}, U_{n})$. It follows that
\begin{equation}
	\begin{array}{ll}
		\mathcal{H}_{\xi}(T^{n}(U), T^{n+1}(U)) &= \mathcal{H}_{\xi}(U_{n}, U_{n+1})  \\
		&= \mathcal{H}_{\xi}(T(U_{n-1}), T(U_{n}))  \\
		&\leq \phi \Big(\mathcal{H}_{\xi}(U_{n - 1}, U_{n})\Big)  \\
		&= \phi \Big(\mathcal{H}_{\xi}(T(U_{n-2}), T(U_{n-1})) \Big)  \\
		&\leq \phi^{2} \Big(\mathcal{H}_{\xi}(U_{n - 2}, U_{n-1})\Big)  \\
		&\qquad \vdots \qquad \vdots \qquad \vdots  \\
		&\leq \phi^{n} \Big(\mathcal{H}_{\xi}(U_{0}, T(U_{0}))\Big).
	\end{array}
\label{teq1.1.4}
\end{equation}
	\noindent Now for $m, n \in \mathbb{N}$ with $m > n$,
	\begin{align*}
		& \mathcal{H}_{\xi}(T^{n}(U), T^{m}(U)) \\  				
&\leq \mathcal{H}_{\xi}(T^{n}(U), T^{n+1}(U))+ \dots + \mathcal{H}_{\xi}(T^{n+k}(U), T^{n+k+1}(U)) \\
& \hspace{5cm} + \dots + \mathcal{H}_{\xi}(T^{m-1}(U), T^{m}(U)) \\
		&\leq \phi^{n} \Big(\mathcal{H}_{\xi}(U_{0}, T(U_{0}))\Big) + \dots + \phi^{n+k} \Big(\mathcal{H}_{\xi}(U_{0}, T(U_{0}))\Big) \\
& \hspace{5cm} + \cdots + \phi^{m-1} \Big(\mathcal{H}_{\xi}(U_{0}, T(U_{0}))\Big).
	\end{align*}
	\noindent It then follows that by taking the upper limits as $n \rightarrow \infty$ we have that the sequence $\{T^{n}(U)\}$ is a Cauchy. Since $(CB^{\xi}(Y), \mathcal{H}_{\xi})$ is a complete dislocated metric space, there exists $U^{*} \in CB^{\xi}(Y)$ such that $T^{n}(U) \rightarrow U^{*}$ as $n \rightarrow \infty$, that is, $\lim\limits_{n \rightarrow \infty} \mathcal{H}_{\xi}(T^{n}(U), U^{*}) = \lim\limits_{n \rightarrow \infty} \mathcal{H}_{\xi}(T^{n}(U), T^{n+1}(U)) = \mathcal{H}_{\xi}(U^{*}, U^{*})$.
It follows from inequality \eqref{teq1.1.4}, that $\mathcal{H}_{\xi}(T^{n}(U), T^{n+1}(U)) \leq \phi^{n}\big(\mathcal{H}_{\xi}(T(U^{*}), U^{*})\big)$, by taking the upper limit as $n \rightarrow \infty$ implies that $\mathcal{H}_{\xi}(T^{n}(U), T^{n+1}(U)) = 0$ and due to the uniqueness of limits we obtain  $\mathcal{H}_{\xi}(U^{*}, U^{*}) = 0$.
Then by property $\mathcal{\overset{*}{P}}$, there exists a subsequence $\{T^{n_{k}}(U)\}$ such that there's an edge between $T^{n_{k} - 1}(U)$ and $U^{*}$. Now by the triangle inequality \ref{dislocated_ax-d3}, we have
	\begin{align}
		\mathcal{H}_{\xi}(T(U^{*}), U^{*}) &\leq \mathcal{H}_{\xi}(T(U^{*}), T^{n_{k}}(U^{*})) + \mathcal{H}_{\xi}(T^{n_{k}}(U^{*}), U^{*}) \notag \\
		&\leq \phi \Big( M_{T}(U^{*}, T^{n_{k} - 1}(U^{*})) \Big) + \mathcal{H}_{\xi}(T^{n_{k}}(U^{*}), U^{*}),\label{teq1.1.5}
	\end{align}
	where
	\begin{align*}
		& M_{T}(T^{n_{k} - 1}(U^{*}), U^{*})\\
&= \max\Big\{\mathcal{H}_{\xi}(T^{n_{k} - 1}(U^{*}), U^{*}), \mathcal{H}_{\xi}(T^{n_{k} - 1}(U^{*}), T(T^{n_{k} - 1}(U^{*}))),  \\
		&\qquad \qquad \mathcal{H}_{\xi}(U^{*}, T(U^{*})), \mathcal{H}_{\xi}(T(T^{n_{k} - 1}(U^{*})), T(U^{*})),\\
		&\qquad \qquad \mathcal{H}_{\xi}(T^{2}(T^{n_{k} - 1}(U^{*})), U^{*}), \mathcal{H}_{\xi}(T^{2}(T^{n_{k} - 1}(U^{*})), T(U^{*})) \notag \\
		&\qquad \qquad \frac{(\mathcal{H}_{\xi}(U^{*}, T(T^{n_{k} - 1}(U^{*}))) + \mathcal{H}_{\xi}(T^{n_{k} - 1}(U^{*}), T(U^{*})))}{3} \Big\}.
	\end{align*}
	
	\noindent \textbf{Case} $1$: If $M_{T}(T^{n_{k} - 1}(U^{*}), U^{*}) =  \mathcal{H}_{\xi}(T^{n_{k} - 1}(U^{*}), U^{*})$, then by inequality \eqref{teq1.1.5} we have
	\begin{align*}
		\mathcal{H}_{\xi}(T(U^{*}), U^{*}) & \leq \phi \Big(H_{\rho}(T^{n_{k} - 1}(U^{*}), U^{*})\Big) + H_{\rho}(T^{n_{k}}(U^{*}), U^{*})
	\end{align*}
and taking the upper limit $n \rightarrow \infty$, we have
	\begin{align*}
		\mathcal{H}_{\xi}(T(U^{*}), U^{*}) & \leq \phi (0) + 0
	\end{align*}
	and hence $T(U^{*}) = U^{*}$.
	
	\noindent \textbf{Case} $2$: If $M_{T}(T^{n_{k} - 1}(U^{*}), U^{*}) = \mathcal{H}_{\xi}(T^{n_{k} - 1}(U^{*}), T^{n_{k}}(U^{*}))$, then by inequality \eqref{teq1.1.5}, we have	
	\begin{align*}
		\mathcal{H}_{\xi}(T(U^{*}), U^{*}) & \leq \phi \Big(\mathcal{H}_{\xi}(T^{n_{k} - 1}(U^{*}), T^{n_{k}}(U^{*}))\Big) + \mathcal{H}_{\xi}(T^{n_{k}}(U^{*}), U^{*})
	\end{align*}
	and by taking the upper limit $n \rightarrow \infty$, we have
	\begin{align*}
		\mathcal{H}_{\xi}(T(U^{*}), U^{*}) & \leq \phi (0) + 0
	\end{align*}
and hence $T(U^{*}) = U^{*}$.
	
	\noindent \textbf{Case} $3$: If $M_{T}(T^{n_{k} - 1}(U^{*}), U^{*}) = \mathcal{H}_{\xi}(U^{*}, T(U^{*}))$, then by inequality \eqref{teq1.1.5}, we have
	\begin{align*}
		\mathcal{H}_{\xi}(T(U^{*}), U^{*}) & \leq \phi \Big(\mathcal{H}_{\xi}(U^{*}, T(U^{*}))\Big) + \mathcal{H}_{\xi}(T^{n_{k}}(U^{*}), U^{*})
	\end{align*}
and on taking the upper limit $n \rightarrow \infty$, we have:
\begin{align*}
		\mathcal{H}_{\xi}(T(U^{*}), U^{*}) & \leq \phi \Big(\mathcal{H}_{\xi}(U^{*}, T(U^{*}))\Big) + 0
		< \mathcal{H}_{\xi}(U^{*}, T(U^{*}))
\end{align*}
and hence we must have: $\mathcal{H}_{\xi}(T(U^{*}), U^{*}) = 0$. \\
	
\noindent \textbf{Case} $4$: If $M_{T}(T^{n_{k} - 1}(U^{*}), U^{*}) = \mathcal{H}_{\xi}(T^{n_{k} + 1}(U^{*}), U^{*})$, then by inequality \eqref{teq1.1.5} we have
	\begin{align*}
		\mathcal{H}_{\xi}(T(U^{*}), U^{*}) & \leq \phi \Big(\mathcal{H}_{\xi}(T^{n_{k} + 1}(U^{*}), U^{*})\Big) + \mathcal{H}_{\xi}(T^{n_{k}}(U^{*}), U^{*})
	\end{align*}
and on taking the upper limit $n \rightarrow \infty$, we have
	\begin{align*}
		\mathcal{H}_{\xi}(T(U^{*}), U^{*}) & \leq \phi (0) + 0,
	\end{align*}
and hence we must have $\mathcal{H}_{\xi}(T(U^{*}), U^{*}) = 0$. \\
	
	\noindent \textbf{Case} $5$: If $M_{T}(T^{n_{k} - 1}(U^{*}), U^{*})
= \mathcal{H}_{\xi}(T^{n_{k} + 1}(U^{*}), U^{*})$, then by inequality \eqref{teq1.1.5} we have
	\begin{align*}
		\mathcal{H}_{\xi}(T(U^{*}), U^{*}) & \leq \phi \Big(\mathcal{H}_{\xi}(T^{n_{k} + 1}(U^{*}), U^{*})\Big) + \mathcal{H}_{\xi}(T^{n_{k}}(U^{*}), U^{*})
	\end{align*}
	and on taking the upper limit $n \rightarrow \infty$, we have
	\begin{align*}
		\mathcal{H}_{\xi}(T(U^{*}), U^{*}) & \leq \phi (0) + 0
	\end{align*}
	and hence we must have  $\mathcal{H}_{\xi}(T(U^{*}), U^{*}) = 0$.  \\
	
	\noindent \textbf{Case} $6$: If $M_{T}(T^{n_{k} - 1}(U^{*}), U^{*}) = \mathcal{H}_{\xi}(T^{n_{k} + 1}(U^{*}), T(U^{*}))$, then by inequality \eqref{teq1.1.5} we have
	\begin{align*}
		\mathcal{H}_{\xi}(T(U^{*}), U^{*}) & \leq \phi \Big(\mathcal{H}_{\xi}(T^{n_{k} + 1}(U^{*}), 			T(U^{*}))\Big) + \mathcal{H}_{\xi}(T^{n_{k}}(U^{*}), U^{*}) \\
		&\leq \phi\Big(\mathcal{H}_{\xi}(T^{n_{k} + 1}(U^{*}), U^{*}) + \mathcal{H}_{\xi}(U^{*}, T(U^{*}))\Big) \\
		&+ \mathcal{H}_{\xi}(T^{n_{k}}(U^{*}), U^{*})
	\end{align*}
taking the upper limit $n \rightarrow \infty$, we have
	\begin{align*}
		\mathcal{H}_{\xi}(T(U^{*}), U^{*}) & \leq \phi \Big(\mathcal{H}_{\xi}(U^{*}, T(U^{*})) + 0 \Big) + 0
	\end{align*}
and hence as in \textbf{Case} $5$, we must have: $\mathcal{H}_{\xi}(T(U^{*}), U^{*}) = 0$. \\
	
	\noindent \textbf{Case} $7$: If $M_{T}(T^{n_{k} - 1}(U^{*}), U^{*}) = \frac{[\mathcal{H}_{\xi}(U^{*}, T^{n_{k}}(U^{*})) + \mathcal{H}_{\xi}(T^{n_{k} - 1}(U^{*}), T(U^{*}))]}{3}$, then by inequality \eqref{teq1.1.5} we have
	\begin{align*}
		&\mathcal{H}_{\xi}(T(U^{*}), U^{*}) \leq \phi \Big(\frac{\mathcal{H}_{\xi}(U^{*}, T^{n_{k}}(U^{*})) + \mathcal{H}_{\xi}(T^{n_{k} - 1}(U^{*}), T(U^{*}))}{2}\Big) \\
& \quad + \mathcal{H}_{\xi}(T^{n_{k}}(U^{*}), U^{*}) \\
		& \leq \phi \Big(\frac{\mathcal{H}_{\xi}(U^{*}, T^{n_{k}}(U^{*})) + \mathcal{H}_{\xi}(T^{n_{k} - 1}(U^{*}), U^{*}) + \mathcal{H}_{\xi}(U^{*}, T(U^{*}))}{3}\Big)  \\
		& \quad + \mathcal{H}_{\xi}(T^{n_{k}}(U^{*}), U^{*}).
	\end{align*}
Then by taking the upper limit $n \rightarrow \infty$, and as in \textbf{Case} $6$ we get $\mathcal{H}_{\xi}(T(U^{*}), U^{*}) = 0$.
	
\noindent \ref{dislocated_theorem-m4}): Suppose by way of contradiction that $|F(T)| > 1$, that is, for
$U, V \in F(T)$ such that $\mathcal{H}_{\xi}(T(U), T(V)) = \mathcal{H}_{\xi}(U, V) > 0$,
	\begin{align*}
		0 < \mathcal{H}_{\xi}(U, V) &= \mathcal{H}_{\xi}(T(U), T(V))
		\leq \phi \Big(M_{T}(U, V)\Big),
	\end{align*}
where after applying the definition of $M_{T}$, we obtain $M_{T}(U, V) = \mathcal{H}_{\xi}(U, V)$. Thus
$0 < \mathcal{H}_{\xi}(U, V) \leq \phi \Big(\mathcal{H}_{\xi}(U, V)\Big)$, and a contradiction follows. Conversely, it follows that if $U \in F(T)$, then $T(U) = U$, clearly we have $(U, T(U)) \subseteq E(G)$ and since $F(T)$ is a singleton, it is complete.
\qed \end{proof}

\begin{corollary}
	\label{CorNaS1:1.4.1}
	Let $(CB^{\xi}(Y), \mathcal{H}_{\xi})$ be a complete dislocated metric space.

If $T: CB^{\xi}(Y) \rightarrow CB^{\xi}(Y)$ is a multi-valued mapping with set-valued domain such that for all $U, V \in CB^{\xi}(Y)$, we have
	\begin{align*}
		\mathcal{H}_{\xi}(T(U), T(V)) &\leq \lambda M_{T}(U,V),
	\end{align*}
	is satisfied with $\lambda \in [0, 1) $, where
	\begin{align*}
		M_{T}(U, V) &= \max\Big\{\mathcal{H}_{\xi}(U, V), \mathcal{H}_{\xi}(U, T(U)), \mathcal{H}_{\xi}(V, T(V)), \notag \\
		&\qquad \qquad \mathcal{H}_{\xi}(T(U), T(V)), \mathcal{H}_{\xi}(T^{2}(U), V), \mathcal{H}_{\xi}(T^{2}(U), T(V)), \notag \\
		&\qquad \qquad \frac{[\mathcal{H}_{\xi}(V, T(U)) + \mathcal{H}_{\xi}(U, T(V))]}{3} \Big\}.
	\end{align*}
	Then $T$ has at most one fixed point.
\end{corollary}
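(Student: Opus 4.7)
The plan is to mimic the uniqueness argument of part \ref{dislocated_theorem-m4} of Theorem \ref{ThmNaS1:1.4.1}, specialized to the linear control function $\phi(t)=\lambda t$ with $\lambda\in[0,1)$. Since the statement only claims that \emph{at most one} fixed point exists, I do not have to produce one; I only need to rule out having two distinct fixed points in the $\mathcal{H}_{\xi}$-sense.

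First, I would assume for contradiction that there exist $U,V\in CB^{\xi}(Y)$ with $T(U)=U$, $T(V)=V$, and $\mathcal{H}_{\xi}(U,V)>0$. Substituting these fixed-point identities into the definition of $M_{T}(U,V)$, every entry collapses to something dominated by $\mathcal{H}_{\xi}(U,V)$: the terms $\mathcal{H}_{\xi}(U,V)$, $\mathcal{H}_{\xi}(T(U),T(V))$, $\mathcal{H}_{\xi}(T^{2}(U),V)$, $\mathcal{H}_{\xi}(T^{2}(U),T(V))$ all equal $\mathcal{H}_{\xi}(U,V)$ outright, while $\mathcal{H}_{\xi}(V,T(U))=\mathcal{H}_{\xi}(U,T(V))=\mathcal{H}_{\xi}(U,V)$ makes the rational term equal to $\tfrac{2}{3}\mathcal{H}_{\xi}(U,V)\le\mathcal{H}_{\xi}(U,V)$.

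The only entries requiring extra care are the ``self-distances'' $\mathcal{H}_{\xi}(U,T(U))=\mathcal{H}_{\xi}(U,U)$ and $\mathcal{H}_{\xi}(V,T(V))=\mathcal{H}_{\xi}(V,V)$, which are not obviously zero in the dislocated setting. This is the step I expect to be the main (mild) obstacle, but it is exactly what Lemma~\ref{CorNaS1:1.2.1} was proved to handle: it gives $\mathcal{H}_{\xi}(U,U)\le\mathcal{H}_{\xi}(U,V)$ and $\mathcal{H}_{\xi}(V,V)\le\mathcal{H}_{\xi}(V,U)=\mathcal{H}_{\xi}(U,V)$. Combining all bounds yields $M_{T}(U,V)=\mathcal{H}_{\xi}(U,V)$.

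Finally, plugging this into the contraction hypothesis gives
\[
\mathcal{H}_{\xi}(U,V)=\mathcal{H}_{\xi}(T(U),T(V))\le \lambda\,M_{T}(U,V)=\lambda\,\mathcal{H}_{\xi}(U,V)<\mathcal{H}_{\xi}(U,V),
\]
which is the required contradiction. Therefore $\mathcal{H}_{\xi}(U,V)=0$, and by item~3) of Theorem~\ref{ThmNaS1:1.2.1} this forces $U=V$, proving that $T$ has at most one fixed point.
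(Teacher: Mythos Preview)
Your argument is correct and is essentially the paper's approach: the paper simply sets $\phi(t)=\lambda t$ and invokes Theorem~\ref{ThmNaS1:1.4.1}, whose part~\ref{dislocated_theorem-m4}) is precisely the uniqueness computation you have written out in full. Your explicit use of Lemma~\ref{CorNaS1:1.2.1} to bound the self-distances $\mathcal{H}_{\xi}(U,U)$ and $\mathcal{H}_{\xi}(V,V)$ is a welcome clarification that the paper's own proof of Theorem~\ref{ThmNaS1:1.4.1} leaves implicit when it asserts $M_{T}(U,V)=\mathcal{H}_{\xi}(U,V)$.
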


\begin{proof} \smartqed
	Let $\phi(t) = \lambda t$ for $\lambda \in [0, 1)$, then by Theorem \ref{ThmNaS1:1.4.1} we get our result.
\qed \end{proof}	
\begin{theorem}
	\label{ThmNaS1:1.4.2}
	Let $(Y, \xi)$ be a complete dislocated metric space equipped with a directed graph $G$, where $V = Y$ and $\Delta \subseteq CB^{\xi}(Y)$. If $S : CB^{\xi}(Y) \rightarrow CB^{\xi}(Y)$ is a generalized rational graphic $\phi-$contraction mapping. Then the following statements hold:
\begin{enumerate}[label=\textup{\arabic*)}, ref=\arabic*]
		\item If $F(S)$ is complete, then the weight assigned to $U, V \in F(S)$ is zero. \label{dislocated_theorem-n1}
		\item $Y_{S} \neq \emptyset$ given that $F(S) \neq \emptyset$. \label{dislocated_theorem-n2}
		\item If $Y_{S} \neq \emptyset$ and $\tilde{G}$ satisfies property $\mathcal{\overset{*}{P}}$, then $S$ has a fixed point. \label{dislocated_theorem-n3}
		\item $F(S)$ is complete if and only if $F(S)$ is singleton. \label{dislocated_theorem-n4}
	\end{enumerate}
\end{theorem}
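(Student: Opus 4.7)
\noindent\textit{Proof plan.} The plan is to mirror the four-part structure of the proof of Theorem \ref{ThmNaS1:1.4.1}, replacing $M_T$ with the rational expression $N_S$ and exploiting Lemma \ref{CorNaS1:1.2.1} to collapse the rational fractions whenever self-weights $\mathcal{H}_{\xi}(U,U)$ or $\mathcal{H}_{\xi}(V,V)$ appear in the denominators or in the bracketed factors $[1+\mathcal{H}_{\xi}(\cdot,S(\cdot))]$.

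For \ref{dislocated_theorem-n1}, given $U,V\in F(S)$ with $\mathcal{H}_{\xi}(U,V)>0$, I would substitute $S(U)=U$ and $S(V)=V$ into each of the three arguments of $N_S(U,V)$; by Lemma \ref{CorNaS1:1.2.1} we have $\mathcal{H}_{\xi}(U,U),\mathcal{H}_{\xi}(V,V)\le\mathcal{H}_{\xi}(U,V)$, so each rational term is bounded above by $\mathcal{H}_{\xi}(U,V)$. Monotonicity of $\phi$ then forces $\mathcal{H}_{\xi}(U,V)\le\phi(\mathcal{H}_{\xi}(U,V))<\mathcal{H}_{\xi}(U,V)$, a contradiction. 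Statement \ref{dislocated_theorem-n2} is immediate: if $S(U)=U$, then $(U,S(U))=(U,U)\in\Delta$, so $U\in Y_S$.

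For \ref{dislocated_theorem-n3}, starting from $U_0\in Y_S$ with $\mathcal{H}_{\xi}(U_0,S(U_0))>0$ (otherwise we are done), I would iterate $U_{n+1}=S(U_n)$, using property \ref{dislocated_propS1} to propagate adjacency along the orbit. Substituting $S(U_{n-1})=U_n$ and $S(U_n)=U_{n+1}$ into $N_S(U_{n-1},U_n)$, the denominator $1+\mathcal{H}_{\xi}(U_{n-1},U_n)$ cancels the factor $1+\mathcal{H}_{\xi}(U_{n-1},S(U_{n-1}))$ appearing in every numerator, leaving a maximum of three simpler quantities: $\tfrac{1}{2}\mathcal{H}_{\xi}(U_{n-1},U_{n+1})$, $\mathcal{H}_{\xi}(U_n,U_{n+1})$, and $\mathcal{H}_{\xi}(U_n,U_n)$. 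The case in which the maximum equals $\mathcal{H}_{\xi}(U_n,U_{n+1})$ is ruled out by $\phi(t)<t$; the triangle inequality together with Lemma \ref{CorNaS1:1.2.1} then yield the key bound $\mathcal{H}_{\xi}(U_n,U_{n+1})\le\phi^{\,n}(\mathcal{H}_{\xi}(U_0,S(U_0)))$. Summation and completeness of $(CB^{\xi}(Y),\mathcal{H}_{\xi})$ produce a limit $U^*$ with $\mathcal{H}_{\xi}(U^*,U^*)=0$; property $\mathcal{\overset{*}{P}}$ then supplies a subsequence adjacent to $U^*$, and a case analysis on $N_S(S^{n_k-1}(U_0),U^*)$, analogous to Cases 1--7 of Theorem \ref{ThmNaS1:1.4.1} but simpler since $N_S$ has only three terms, combined with upper semi-continuity of $\phi$ and Lemma \ref{LemNaS1:1.2.1}, forces $\mathcal{H}_{\xi}(S(U^*),U^*)=0$.

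Finally, \ref{dislocated_theorem-n4} follows from \ref{dislocated_theorem-n1} together with part~3) of Theorem \ref{ThmNaS1:1.2.1}: if $F(S)$ is complete, then any two fixed points have zero $PH$-distance and hence coincide, while the converse is trivial since a singleton is complete. I expect the main obstacle to be the limit step in \ref{dislocated_theorem-n3}, specifically the subcase in which the rational factor $[1+\mathcal{H}_{\xi}(U^*,S(U^*))]/[1+\mathcal{H}_{\xi}(S^{n_k-1}(U_0),U^*)]$ does not obviously vanish---the analogue of Case 6 of Theorem \ref{ThmNaS1:1.4.1}---which will require a second triangle-inequality expansion followed by upper semi-continuity of $\phi$ to push the estimate strictly below $\mathcal{H}_{\xi}(S(U^*),U^*)$.
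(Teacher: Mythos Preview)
Your plan is correct and follows essentially the same route as the paper's own proof: the same substitution-and-cancellation in $N_S(U_{n-1},U_n)$, the same iterated bound $\phi^{n}(\mathcal{H}_{\xi}(U_0,S(U_0)))$, the same Cauchy/completeness argument, and a three-case analysis at the limit in place of the seven cases of Theorem \ref{ThmNaS1:1.4.1}. One small correction to your anticipated obstacle: when you apply the contraction with $(U,V)=(S^{n_k-1}(U_0),U^{*})$, the bracketed factor in every numerator is $[1+\mathcal{H}_{\xi}(S^{n_k-1}(U_0),S^{n_k}(U_0))]$, not $[1+\mathcal{H}_{\xi}(U^{*},S(U^{*}))]$, and this tends to $1$ directly---so the ``Case 6 analogue'' you worry about is in fact easier than expected, requiring only the triangle-inequality expansion of $\mathcal{H}_{\xi}(S^{n_k-1}(U_0),S(U^{*}))$ that the paper also uses.
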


\begin{proof} \smartqed 
\ref{dislocated_theorem-n1}) Suppose otherwise that $U, V \in F(S)$ and that $\mathcal{H}_{\xi}(U, V) > 0$. Then since $S(U) = U$ and $S(V) = V$, it follows that
	\begin{align*}
		\mathcal{H}_{\xi}(S(U), S(V)) &\leq \phi \Big(N_{S}(U, V)\Big),
	\end{align*}
	where
	\begin{align*}
		N_{S}(U, V) &= \max\Big\{\frac{\mathcal{H}_{\xi}(U, S(V))[1 + \mathcal{H}_{\xi}(U, S(U))]}{2[1 + \mathcal{H}_{\xi}(U, V)]}, \notag \\
		&\qquad \qquad \frac{\mathcal{H}_{\xi}(V, S(V))[1 + \mathcal{H}_{\xi}(U, S(U))]}{1 + \mathcal{H}_{\xi}(U, V)}, \notag \\
		&\qquad \qquad \frac{\mathcal{H}_{\xi}(V, S(U))[1 + \mathcal{H}_{\xi}(U, S(U))]}{1 + \mathcal{H}_{\xi}(U, V)} \Big\},
	\end{align*}
and	
	\begin{align*}
		N_{S}(U, V) &= \max\Big\{\frac{\mathcal{H}_{\xi}(U, V)}{2[1 + \mathcal{H}_{\xi}(U, V)]}, \frac{\mathcal{H}_{\xi}(V, U)}{1 + \mathcal{H}_{\xi}(U, V)} \Big\} \\
		& = \frac{\mathcal{H}_{\xi}(V, U)}{1 + \mathcal{H}_{\xi}(U, V)}.
	\end{align*}
Then we have $0 < \mathcal{H}_{\xi}(S(U), S(V)) = \mathcal{H}_{\xi}(U, V) \leq \phi \Big(\frac{\mathcal{H}_{\xi}(V, U)}{1 + \mathcal{H}_{\xi}(U, V)}\Big)$.
Therefore, $$(1 + \mathcal{H}_{\xi}(U, V))\mathcal{H}_{\xi}(U, V) < \mathcal{H}_{\xi}(U, V),$$ which is a contradiction, and our result follows.\\
	
	\noindent \ref{dislocated_theorem-n2}) Let $U \in F(S)$, then since $S(U) = U$, it follows that $(U, S(U)) \subseteq E(G)$, and hence $U \in Y_{S}$. \\
	
	\noindent \ref{dislocated_theorem-n3}) Let $U_{0} \in Y_{S}$. It must be $\mathcal{H}_{\xi}(U_{0}, S(U_{0})) > 0$, since if it was otherwise, then we would have $U_{0}$ a fixed point for $S$. Now, by definition of $S$ there exists $U_{2} \in CB^{\xi}(Y)$ such that there is an edge between $S(U_{0}) = U_{1}$ and $U_{2}$, and $\mathcal{H}_{\xi}(U_{1}, S(U_{1})) > 0$, that is, $(U_{1}, U_{2}) \subseteq E(\tilde{G})$, where $S(U_{1}) = U_{2}$. Following, this argument we build a path $P : U_{0} - U_{1} - \cdots - U_{n}$, where $\{U_{n}\} \subseteq CB^{\xi}(Y)$ is a sequence. The previous argument also builds an iterative sequence by letting $U = U_{0}$ and $S(U_{i}) = U_{i+1}$ for $i = 1, 2, \ldots, n$. By definition of $S$,
	\begin{align*}
		\mathcal{H}_{\xi}(S^{n}(U), S^{n+1}(U)) &= \mathcal{H}_{\xi}(U_{n}, U_{n+1})= \mathcal{H}_{\xi}(S(U_{n-1}), S(U_{n})) \\
		&\leq \phi \Big( N_{S}(U_{n-1}, U_{n}) \Big),
	\end{align*}
	where
	\begin{align*}
		N_{S}(U_{n-1}, U_{n}) &= \max\Big\{\frac{\mathcal{H}_{\xi}(U_{n-1}, S(U_{n}))[1 + \mathcal{H}_{\xi}(U_{n-1}, S(U_{n-1}))]}{2[1 + \mathcal{H}_{\xi}(U_{n-1}, U_{n})]}, \notag \\
		&\qquad \qquad \frac{\mathcal{H}_{\xi}(U_{n}, S(U_{n}))[1 + \mathcal{H}_{\xi}(U_{n-1}, S(U_{n-1}))]}{1 + \mathcal{H}_{\xi}(U_{n-1}, U_{n})}, \\
		&\qquad \qquad \frac{\mathcal{H}_{\xi}(U_{n}, S(U_{n-1}))[1 + \mathcal{H}_{\xi}(U_{n-1}, S(U_{n-1}))]}{1 + \mathcal{H}_{\xi}(U_{n-1}, U_{n})} \Big\}.
	\end{align*}
	Then
	\begin{align*}
		N_{S}(U_{n-1}, U_{n}) &= \max\Big\{\frac{\mathcal{H}_{\xi}(U_{n-1}, U_{n+1})}{2}, \mathcal{H}_{\xi}(U_{n}, U_{n+1}), \mathcal{H}_{\xi}(U_{n}, U_{n}) \Big\} \\
		&\leq \max\Big\{\frac{\mathcal{H}_{\xi}(U_{n-1}, U_{n}) + \mathcal{H}_{\xi}(U_{n}, U_{n+1})}{2}, \mathcal{H}_{\xi}(U_{n}, U_{n+1})\Big\} \\
		&\leq \max \Big\{\mathcal{H}_{\xi}(U_{n-1}, U_{n}), \mathcal{H}_{\xi}(U_{n}, U_{n+1})\Big\}.
	\end{align*}
Now, it follows that if $\Big\{\mathcal{H}_{\xi}(U_{n-1}, U_{n}), \mathcal{H}_{\xi}(U_{n}, U_{n+1})\Big\} = \mathcal{H}_{\xi}(U_{n}, U_{n+1})$, then we have a contradiction. Hence, we must have  $\Big\{\mathcal{H}_{\xi}(U_{n-1}, U_{n}), \mathcal{H}_{\xi}(U_{n}, U_{n+1})\Big\} = \mathcal{H}_{\xi}(U_{n-1}, U_{n})$. It then follows that
	\begin{align}
		\mathcal{H}_{\xi}(S^{n}(U), S^{n+1}(U)) &= \mathcal{H}_{\xi}(U_{n}, U_{n+1}) = \mathcal{H}_{\xi}(S(U_{n-1}), S(U_{n})) \notag \\
		&\leq \phi \Big(\mathcal{H}_{\xi}(U_{n - 1}, U_{n})\Big) \notag \\
		&= \phi \Big(\mathcal{H}_{\xi}(S(U_{n-2}), S(U_{n-1})) \Big) \notag \\
		&\leq \phi^{2} \Big(\mathcal{H}_{\xi}(U_{n - 2}, U_{n-1})\Big) \notag \\
		&\qquad \vdots \qquad \vdots \qquad \vdots \notag \\
		&\leq \phi^{n} \Big(\mathcal{H}_{\xi}(U_{0}, S(U_{0}))\Big). \label{teq1.1.6}
	\end{align}
Now for $m, n \in \mathbb{N}$ with $m > n$,
	\begin{align*}
		&\mathcal{H}_{\xi}(S^{n}(U), S^{m}(U))\leq \mathcal{H}_{\xi}(S^{n}(U), S^{n+1}(U))\\  				 &+ \mathcal{H}_{\xi}(S^{n+1}(U), S^{n+2}(U)) + \cdots + \mathcal{H}_{\xi}(S^{m-1}(U), S^{m}(U)) \\
		&\leq \phi^{n} \Big(\mathcal{H}_{\xi}(U_{0}, S(U_{0}))\Big) + \phi^{n+1} \Big(\mathcal{H}_{\xi}(U_{0}, S(U_{0}))\Big) + \cdots + \phi^{m-1} \Big(\mathcal{H}_{\xi}(U_{0}, S(U_{0}))\Big).
	\end{align*}
It then follows that by taking the upper limits as $n \rightarrow \infty$ we have that the sequence $\{S^{n}(U)\}$ is a Cauchy. Since $(CB^{\xi}(Y), \mathcal{H}_{\xi})$ is a complete dislocated metric space, there exists $U^{*} \in CB^{\xi}(Y)$ such that $S^{n}(U) \rightarrow U^{*}$ as $n \rightarrow \infty$; that is, $\lim\limits_{n \rightarrow \infty} \mathcal{H}_{\xi}(S^{n}(U), U^{*}) = \lim\limits_{n \rightarrow \infty} \mathcal{H}_{\xi}(S^{n}(U), S^{n+1}(U)) = \mathcal{H}_{\xi}(U^{*}, U^{*})$.
It follows from inequality \eqref{teq1.1.6}, that $\mathcal{H}_{\xi}(S^{n}(U), S^{n+1}(U)) \leq \phi^{n}\big(\mathcal{H}_{\xi}(S(U^{*}), U^{*})\big)$, by taking the upper limit as $n \rightarrow \infty$ implies that $\mathcal{H}_{\xi}(S^{n}(U), S^{n+1}(U)) = 0$ and due to the uniqueness of limits we obtain: $\mathcal{H}_{\xi}(U^{*}, U^{*}) = 0$.
Now we are to show that $U^{*}=S(U^{*})$. Assume otherwise that $\mathcal{H}_{\xi}(S(U^{*}), U^{*})>0$. Then by property $\mathcal{\overset{*}{P}}$, there exists a subsequence $\{S^{n_{k}}(U)\}$ such that there's an edge between $S^{n_{k} - 1}(U)$ and $U^{*}$. Now by the triangle inequality \ref{dislocated_ax-d3}, we have
	\begin{align*}
		H_{\rho}(S(U^{*}), U^{*})
		&\leq H_{\rho}(S^{n_{k}}(U^{*}),S(U^{*})) + H_{\rho}(S^{n_{k}}(U^{*}), U^{*}) \notag \\
		&\leq \phi \Big( N_{S}(S^{n_{k} - 1}(U^{*}),U^{*}) \Big) + H_{\rho}(S^{n_{k}}(U^{*}), U^{*}),
	\end{align*}
	where
	\begin{eqnarray*}
		N_{S}(S^{n_{k}-1}U^{\ast },U^{\ast}) &=&\max \{ \frac{\mathcal{H}_{\xi}(S^{n_{k}-1}U^{\ast },S(U^{\ast }))[1+\mathcal{H}_{\xi}(S^{n_{k}-1}U^{\ast
			},S(S^{n_{k}-1}U^{\ast }))]}{2[1+\mathcal{H}_{\xi}(S^{n_{k}-1}U^{\ast },U^{\ast })]},
		\\
		&&\qquad  \frac{\mathcal{H}_{\xi}(U^{\ast },S(U^{\ast }))[1+\mathcal{H}_{\xi}(S^{n_{k}-1}U^{\ast
			},S(S^{n_{k}-1}U^{\ast }))]}{1+\mathcal{H}_{\xi}(S^{n_{k}-1}U^{\ast },U^{\ast })}, \\
		&&\qquad  \frac{\mathcal{H}_{\xi}(U^{\ast },S(S^{n_{k}-1}U^{\ast }))[1+\mathcal{H}_{\xi}(S^{n_{k}-1}U^{\ast },S(S^{n_{k}-1}U^{\ast }))]}{1+\mathcal{H}_{\xi}(S^{n_{k}-1}U^{\ast },U^{\ast })}\}\\
		&&=\max \{ \frac{\mathcal{H}_{\xi}(S^{n_{k}-1}U^{\ast },S(U^{\ast }))[1+\mathcal{H}_{\xi}(S^{n_{k}-1}U^{\ast
			},S^{n_{k}}U^{\ast })]}{2[1+\mathcal{H}_{\xi}(S^{n_{k}-1}U^{\ast },U^{\ast })]}, \\
		&&\qquad \qquad \frac{\mathcal{H}_{\xi}(U^{\ast },S(U^{\ast }))[1+\mathcal{H}_{\xi}(S^{n_{k}-1}U^{\ast
			},S^{n_{k}-1}U^{\ast })]}{1+\mathcal{H}_{\xi}(S^{n_{k}-1}U^{\ast },U^{\ast })}, \\
		&&\qquad \qquad \frac{\mathcal{H}_{\xi}(U^{\ast },S^{n_{k}}U^{\ast })[1+\mathcal{H}_{\xi}(S^{n_{k}-1}U^{\ast },S^{n_{k}}U^{\ast })]}{1+\mathcal{H}_{\xi}(S^{n_{k}-1}U^{\ast
			},U^{\ast })}\}.
	\end{eqnarray*}
We will consider cases taking each of $N_{S}(S^{n_{k} - 1}(U^{*}),U^{*})$.\\
	
	\noindent \textbf{Case $1$:}
	\begin{align*}
		& \mathcal{H}_{\xi}(S(U^{*}), U^{*}) \leq \mathcal{H}_{\xi}( S^{n_{k}}(U^{*}),S(U^{*})) + \mathcal{H}_{\xi}(S^{n_{k}}(U^{*}), U^{*})  \\
		&\leq \phi \Big( \frac{\mathcal{H}_{\xi}(S^{n_{k}-1}U^{\ast },S(U^{\ast }))[1+\mathcal{H}_{\xi}(S^{n_{k}-1}U^{\ast
			},S^{n_{k}}U^{\ast })]}{2[1+H\mathcal{H}_{\xi}(S^{n_{k}-1}U^{\ast },U^{\ast })]} \Big) + \mathcal{H}_{\xi}(S^{n_{k}}(U^{*}), U^{*})\\
		&\leq \phi \Big( \frac{[\mathcal{H}_{\xi}(S^{n_{k}-1}U^{\ast },U^{\ast })+\mathcal{H}_{\xi}(U^{\ast },S(U^{\ast }))][1+\mathcal{H}_{\xi}(S^{n_{k}-1}U^{\ast
			},S^{n_{k}}U^{\ast })]}{2[1+\mathcal{H}_{\xi}(S^{n_{k}-1}U^{\ast },U^{\ast })]} \Big) \\
		& \ \ \ \ + \mathcal{H}_{\xi}(S^{n_{k}}(U^{*}), U^{*})
	\end{align*}
and by taking the upper limit $n_{k} \rightarrow \infty$ we get
\begin{align*}
		\mathcal{H}_{\xi}(S(U^{*}), U^{*}) &\leq \phi(\mathcal{H}_{\xi}(U^{\ast },S(U^{\ast })) + 0
		<\mathcal{H}_{\xi}(S(U^{*}), U^{*}),
	\end{align*}
which is a contradiction. Hence, $S(U^{*})= U^{*}$. \\
	
	\noindent \textbf{Case $2$:}
	\begin{align*}
		& \mathcal{H}_{\xi}(S(U^{*}), U^{*})\leq \mathcal{H}_{\xi}( S^{n_{k}}(U^{*}),S(U^{*})) + \mathcal{H}_{\xi}(S^{n_{k}}(U^{*}), U^{*})  \\
		&\leq \phi \Big( \frac{\mathcal{H}_{\xi}(U^{\ast },S(U^{\ast }))[1+\mathcal{H}_{\xi}(S^{n_{k}-1}(U^{\ast }),S^{n_{k}-1}(U^{\ast }))]}{1+\mathcal{H}_{\xi}(S^{n_{k}-1}U^{\ast },U^{\ast })} \Big) + \mathcal{H}_{\xi}(S^{n_{k}}(U^{*}), U^{*})
	\end{align*}
and by taking the upper limit $n_{k} \rightarrow \infty$ we get
\begin{align*}
		\mathcal{H}_{\xi}(S(U^{*}), U^{*}) &\leq \phi(\mathcal{H}_{\xi}(U^{\ast },S(U^{\ast })) + 0
		<\mathcal{H}_{\xi}(S(U^{*}), U^{*}),
	\end{align*}
which is a contradiction. Hence, $S(U^{*})= U^{*}$. \\
	
	\noindent \textbf{Case $3$:}
	\begin{align*}
		& \mathcal{H}_{\xi}(S(U^{*}), U^{*})\leq \mathcal{H}_{\xi}( S^{n_{k}}(U^{*}),S(U^{*})) + \mathcal{H}_{\xi}(S^{n_{k}}(U^{*}), U^{*})  \\
		&\leq \phi \Big( \frac{\mathcal{H}_{\xi}(U^{\ast },S^{n_{k}}(U^{\ast }))[1+\mathcal{H}_{\xi}(S^{n_{k}-1}(U^{\ast }),S^{n_{k}}(U^{\ast }))]}{1+\mathcal{H}_{\xi}(S^{n_{k}-1}U^{\ast
			},U^{\ast })} \Big) + \mathcal{H}_{\xi}(S^{n_{k}}(U^{*}), U^{*})
	\end{align*}
and by taking the upper limit $n_{k} \rightarrow \infty$, we get
	\begin{align*}
		\mathcal{H}_{\xi}(S(U^{*}), U^{*}) &\leq \phi(0) + 0,
	\end{align*}
	a contradiction and hence $S(U^{*})= U^{*}$. \\
	Thus from all cases we obtain that  $S(U^{*})= U^{*}$. \\

	\noindent \ref{dislocated_theorem-m4}) Suppose by way of contradiction that $|F(S)| > 1$, that is $U, V \in F(S)$ such that $\mathcal{H}_{\xi}(T(U), T(V)) = \mathcal{H}_{\xi}(U, V) > 0$. Then
\begin{align*}
		0 < \mathcal{H}_{\xi}(U, V) &= \mathcal{H}_{\xi}(S(U), S(V)) \leq \phi \Big(N_{S}(U, V)\Big),
\end{align*}
where
	\begin{eqnarray*}
		N_{S}(U,V) &=&\max \Big\{ \frac{\mathcal{H}_{\xi}(U,S(V))[1+\mathcal{H}_{\xi}(U,S(U))]}{%
			2[1+\mathcal{H}_{\xi}(U,V)]}, \\
		&& \qquad \qquad \frac{\mathcal{H}_{\xi}(V,S(V))[1+\mathcal{H}_{\xi}(U,S(U))]}{1+\mathcal{H}_{\xi}(U,V)},\\
		&& \qquad \qquad \frac{\mathcal{H}_{\xi}(V,S(U))[1+\mathcal{H}_{\xi}(U,S(U))]}{1+\mathcal{H}_{\xi}(U,V)}\Big\}\\
		&=&\mathcal{H}_{\xi}(U,V),
	\end{eqnarray*}
implying $0 < \mathcal{H}_{\xi}(U, V) \leq \phi \Big(\mathcal{H}_{\xi}(U, V)\Big)$, whence a contradiction follows.
Conversely it follows that if $U \in F(S)$, then $S(U) = U$, clearly we have $(U, S(U)) \subseteq E(G)$ and since $F(S)$ is a singleton.
\qed \end{proof}

\begin{corollary}
	\label{CorNaS1:1.4.2}
	Let $(CB^{\xi}(Y), \mathcal{H}_{\xi})$ be a complete dislocated metric space.

If $T: CB^{\xi}(Y) \rightarrow CB^{\xi}(Y)$ is a multi-valued mapping with set-valued domain such that for all $U, V \in CB^{\xi}(Y)$, we have
\begin{align*}
\mathcal{H}_{\xi}(T(U), T(V)) &\leq \lambda \mathcal{H}_{\xi}(U, V) \text{ where } \lambda \in [0, 1),
	\end{align*}
then $T$ has a fixed point.
\end{corollary}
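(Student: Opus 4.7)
The plan is to prove the corollary by a direct Picard-iteration argument in the dislocated setting, rather than attempting to reduce it to Theorem \ref{ThmNaS1:1.4.2} by setting $\phi(t)=\lambda t$. Such a reduction would require $\mathcal{H}_{\xi}(U,V)\le N_{T}(U,V)$, which is not evident from the form of $N_{T}$; the direct iterative approach is cleaner and parallels the structure of the proof of Theorem \ref{ThmNaS1:1.4.2} specialised to a linear contraction. First, I would fix any $U_{0}\in CB^{\xi}(Y)$ and define $U_{n+1}=T(U_{n})$. Iterating the hypothesis gives $\mathcal{H}_{\xi}(U_{n},U_{n+1})\le \lambda^{n}\,\mathcal{H}_{\xi}(U_{0},U_{1})$, and for $m>n$ the triangle inequality of Theorem \ref{ThmNaS1:1.2.1} together with a geometric-series bound yields $\mathcal{H}_{\xi}(U_{n},U_{m})\le \frac{\lambda^{n}}{1-\lambda}\mathcal{H}_{\xi}(U_{0},U_{1})$, which tends to zero as $n\to\infty$ since $\lambda\in[0,1)$.

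Hence $\{U_{n}\}$ is Cauchy, and by completeness of $(CB^{\xi}(Y),\mathcal{H}_{\xi})$ there exists $U^{*}\in CB^{\xi}(Y)$ with $U_{n}\to U^{*}$. Specialising the estimate to $m=n+1$ and letting $n\to\infty$ also forces $\mathcal{H}_{\xi}(U^{*},U^{*})=0$. To verify that $U^{*}$ is a fixed point of $T$, I would combine the triangle inequality with the contraction to write $\mathcal{H}_{\xi}(T(U^{*}),U^{*})\le \mathcal{H}_{\xi}(T(U^{*}),T(U_{n}))+\mathcal{H}_{\xi}(U_{n+1},U^{*})\le \lambda\,\mathcal{H}_{\xi}(U^{*},U_{n})+\mathcal{H}_{\xi}(U_{n+1},U^{*})$. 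Both summands on the right tend to $\mathcal{H}_{\xi}(U^{*},U^{*})=0$ as $n\to\infty$, so $\mathcal{H}_{\xi}(T(U^{*}),U^{*})=0$, and item~3 of Theorem \ref{ThmNaS1:1.2.1} yields $T(U^{*})=U^{*}$.

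The main obstacle is the dislocated character of $\mathcal{H}_{\xi}$: because self-distances need not vanish a priori, the Cauchy estimate alone does not give the standard Banach closing argument for free. The intermediate step $\mathcal{H}_{\xi}(U^{*},U^{*})=0$, obtained from the Cauchy bound $\mathcal{H}_{\xi}(U_{n},U_{n+1})\to 0$ together with $U_{n}\to U^{*}$, is what makes the fixed-point conclusion legitimate, and the final passage from zero Hausdorff distance to actual equality is supplied by item~3 of Theorem \ref{ThmNaS1:1.2.1}.
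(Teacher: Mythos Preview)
Your direct Picard-iteration argument is correct: the geometric estimate $\mathcal{H}_{\xi}(U_{n},U_{m})\le \frac{\lambda^{n}}{1-\lambda}\mathcal{H}_{\xi}(U_{0},U_{1})$ makes $\{U_{n}\}$ Cauchy, completeness together with Definition~\ref{DefNaS1:1.2.3} forces $\mathcal{H}_{\xi}(U^{*},U^{*})=0$, and the closing estimate via the triangle inequality and Theorem~\ref{ThmNaS1:1.2.1}(3) legitimately yields $T(U^{*})=U^{*}$.

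However, your route differs from the paper's. The paper simply sets $\phi(t)=\lambda t$ and invokes Theorems~\ref{ThmNaS1:1.4.1} and~\ref{ThmNaS1:1.4.2} (implicitly working over the complete graph $G_{0}$, so that the graph hypotheses and property~$\mathcal{\overset{*}{P}}$ are trivially satisfied). Your objection that $\mathcal{H}_{\xi}(U,V)\le N_{T}(U,V)$ is not evident is well taken for Theorem~\ref{ThmNaS1:1.4.2} alone, but the reduction to Theorem~\ref{ThmNaS1:1.4.1} is immediate, since $\mathcal{H}_{\xi}(U,V)$ is one of the terms in the maximum defining $M_{T}(U,V)$; hence $\mathcal{H}_{\xi}(T(U),T(V))\le \lambda\,\mathcal{H}_{\xi}(U,V)\le \lambda\,M_{T}(U,V)=\phi(M_{T}(U,V))$. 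What the paper's approach buys is brevity once the heavy machinery is in place; what your approach buys is a self-contained argument that does not depend on the graph framework or on the somewhat implicit passage to $G_{0}$, and it makes the role of the dislocated self-distance $\mathcal{H}_{\xi}(U^{*},U^{*})=0$ fully explicit.
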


\begin{proof} \smartqed
	Let $\phi(t) = \lambda t$ for $\lambda \in [0, 1)$, then by Theorem \ref{ThmNaS1:1.4.1} and Theorem \ref{ThmNaS1:1.4.2}, we obtain our desired result.
\qed \end{proof}

\noindent We also state a more general result in the following Corollary.

\begin{corollary}
	\label{CorNaS1:1.4.3}
	Let $(CB^{\xi}(Y), \mathcal{H}_{\xi})$ be a complete dislocated metric space.

If $S: CB^{\xi}(Y) \rightarrow CB^{\xi}(Y)$ is a multi-valued mapping such that for all $U, V \in CB^{\xi}(Y)$, we have
	\begin{align*}
		\mathcal{H}_{\xi}(S(U), S(V)) &\leq \lambda N_{S}(U, V) \text{ for } \lambda \in [0, 1),
	\end{align*}
	where			
	\begin{align*}
		N_{S}(U, V) &= \max\Big\{\frac{\mathcal{H}_{\xi}(U, S(V))[1 + \mathcal{H}_{\xi}(U, S(U))]}{2[1 + \mathcal{H}_{\xi}(U, V)]}, \notag \\
		&\qquad \qquad \frac{\mathcal{H}_{\xi}(V, S(V))[1 + \mathcal{H}_{\xi}(U, S(U))]}{1 + \mathcal{H}_{\xi}(U, V)}, \notag \\
		&\qquad \qquad \frac{\mathcal{H}_{\xi}(V, S(U))[1 + \mathcal{H}_{\xi}(U, S(U))]}{1 + \mathcal{H}_{\xi}(U, V)} \Big\},
	\end{align*}
	then $T$ has a fixed point.
\end{corollary}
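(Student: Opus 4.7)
The plan is to derive Corollary~\ref{CorNaS1:1.4.3} as an immediate specialization of Theorem~\ref{ThmNaS1:1.4.2}. First I set $\phi(t) = \lambda t$ and verify that this function meets the hypotheses required of the control function in Definition~\ref{DefNaS1:1.3.2}: the map $t \mapsto \lambda t$ is continuous (hence upper semi-continuous), non-decreasing since $\lambda \ge 0$, and for any $t > 0$ we have $\phi(t) = \lambda t < t$ because $\lambda < 1$. So $\phi$ is a legitimate control function for the rational graph $\phi$-contraction framework.

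Next I equip $CB^{\xi}(Y)$ with the complete directed graph $G$, i.e.\ $V(G) = Y$ and $E(G) = CB^{\xi}(Y) \times CB^{\xi}(Y)$. Under this choice, condition~\ref{dislocated_propS1} of Definition~\ref{DefNaS1:1.3.2} (edge preservation under $S$) is trivially satisfied, and the inequality $\mathcal{H}_{\xi}(S(U), S(V)) \leq \lambda N_{S}(U, V) = \phi(N_{S}(U, V))$, which is assumed to hold for all $U, V \in CB^{\xi}(Y)$, immediately yields condition~\ref{dislocated_propS2} restricted to pairs $(U, V) \in E(G)$. Hence $S$ is a generalized rational graph $\phi$-contraction in the sense of Definition~\ref{DefNaS1:1.3.2}.

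It remains to verify that the auxiliary hypotheses of Theorem~\ref{ThmNaS1:1.4.2} part~\ref{dislocated_theorem-n3} are met. Because every pair of elements in $CB^{\xi}(Y)$ is an edge, any $U \in CB^{\xi}(Y)$ satisfies $(U, S(U)) \in E(G)$, so $Y_{S} = CB^{\xi}(Y) \neq \emptyset$. The same completeness of the edge set makes property $\mathcal{\overset{*}{P}}$ on $\widetilde{G}$ vacuous: any convergent iterate $\{S^{n_k}(U)\}$ automatically has $S^{n_k - 1}(U)$ joined to its limit by an edge. Invoking Theorem~\ref{ThmNaS1:1.4.2}\,\ref{dislocated_theorem-n3} therefore produces a fixed point of $S$.

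I do not expect a serious obstacle here; the only mild subtlety is the mismatch in notation in the statement (the inequality is stated for $S$ but the conclusion refers to $T$ having a fixed point). I will silently read this as a typographical slip and conclude that $S$ has a fixed point, matching the parallel wording of Corollary~\ref{CorNaS1:1.4.2}.
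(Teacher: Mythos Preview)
Your proposal is correct and follows exactly the paper's approach: the paper's proof simply sets $\phi(t) = \lambda t$ for $\lambda \in [0,1)$ and invokes Theorem~\ref{ThmNaS1:1.4.2}. Your write-up merely spells out the implicit choice of the complete graph and the verification of the control-function hypotheses, and you correctly identify the $T$/$S$ mismatch in the conclusion as a typo.
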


\begin{proof} \smartqed
	Let $\phi(t) = \lambda t$ for $\lambda \in [0, 1)$, then by Theorem \ref{ThmNaS1:1.4.2} we get our result.
\qed \end{proof}




\begin{thebibliography}{99}
	
	\bibitem{abbas2019nadler} Abbas, M., Suleiman, Y.I., de la Sen, M.: Nadler’s fixed point results in dislocated Hausdorff A-metric spaces. Journal of Fixed Point Theory and Applications. \textbf{21} (2), 1--15 (2019)
	
	\bibitem{abbas2015fixed} Abbas, M., Alfuraidan, M. R., Khan, A., Nazir, T.: Fixed point results for set-contractions on metric spaces with a directed graph. Fixed Point Theory Application. \textbf{2015} (1), 1--9 (2015)	
	
	\bibitem{abbas2016common} Abbas, M., Alfuraidan, M.R., Nazir, T., Rashed, M.: Common fixed points of multivalued F-contractions on metric spaces with a directed graph. Carpathian Journal of Mathematics. 1--12 (2016)
	
	\bibitem{abbas2014fixed} Abbas, M., Ali, B., and Petru{\c{s}}el, G.: Fixed points of set-valued contractions in partial metric spaces endowed with a graph. Carpathian Journal of Mathematics. 129--137, (2014)
	
	\bibitem{alber1997principle} Alber, Y.I., Guerre-Delabriere, S.: Principle of weakly contractive maps in Hilbert spaces. New results in operator theory and its applications. 7--22 (1997)
	
	\bibitem{alfuraidan2014remarks} Alfuraidan, M.R.: Remarks on Caristi’s fixed point theorem in metric spaces with a graph. Fixed Point Theory and Applications. Fixed Point Theory and Applications. \textbf{2014} (1), 1--8 (2014)
	
	\bibitem{acar2016multivalued} Acar, O., Altun, I.: Multivalued F-contractive mappings with a graph and some fixed point results. Publicationes mathematicae. \textbf{88} (3-4), 305--317 (2016)
	
	\bibitem{aleomraninejad2012some} Aleomraninejad, S.M.A., Rezapour, Sh., Shahzad, N.: Some fixed point results on a metric space with a graph. Topology and its Appl. \textbf{159} (3), 659--663 (2012)
	
	\bibitem{assad1972fixed} Assad, N., Kirk, W.: Fixed point theorems for set-valued mappings of contractive type. Pacific Journal of Mathematics. \textbf{43} (3), 553--562 (1972)
	
	\bibitem{aydi2018nadler} Aydi, H., Felhi, A., Karapinar, E., Sahmim, S.: A Nadler-type fixed point theorem in dislocated spaces and applications. Miskolc Mathematical Notes. \textbf{19} (1), 111--124 (2018)
	
	\bibitem{banach1922operations} Banach, S.: Sur les op{\'e}rations dans les ensembles abstraits et leur application aux {\'e}quations int{\'e}grales. Fund. Math. \textbf{3} (1), 133--181 (1922)
	
	\bibitem{bojor2010fixed} Bojor, F.: Fixed point of $\varphi$-contraction in metric spaces endowed with a graph. Annals of the University of Craiova-Mathematics and Computer Science Series. \textbf{37} (4), 85--92 (2010)
	
	\bibitem{bojor2013jachymski} Bojor, F.: On Jachymski's theorem. Annals of the University of Craiova-Mathematics and Computer Science Series. \textbf{40} (1), 23--28 (2013)
	
	
	\bibitem{bojor2012fixed} Bojor, F.: Fixed point theorems for Reich type contractions on metric spaces with a graph. Nonlinear Analysis: Theory, Methods \& Applications. \textbf{75} (9), 3895--3901 (2012)
	
	
	\bibitem{beg2013fixed} Beg, I., Butt, A.R.: Fixed point theorems for set valued mappings in partially ordered metric spaces. Int. J. Math. Sci. \textbf{7} (2), 66--68 (2013)
		
	\bibitem{chifu2012generalized} Chifu, C., Petru{\c{s}}el, G.: Generalized contractions in metric spaces endowed with a graph. \textbf{2012} (1),  1--9 (2012)
	
	\bibitem{gwozdz2009ifs} Gw{\'o}{\'z}d{\'z}-{\L}ukawska, G., Jachymski, J.: IFS on a metric space with a graph structure and extensions of the Kelisky-Rivlin theorem. Journal of Mathematical Analysis and Applications. \textbf{356} (2), 453--463 (2009)
	
	\bibitem{harjani2009fixed} Harjani, J., Sadarangani, K.: Fixed point theorems for weakly contractive mappings in partially ordered sets. Nonlinear Analysis: Theory, Methods \& Applications. \textbf{71} (7-8), 3403--3410 (2009)
	
	\bibitem{hitzler2000dislocated} Hitzler, P., Seda, A. K.: Dislocated topologies. J. Electr. Eng. \textbf{51} (12), 3--7 (2000)
	
	\bibitem{jachymski2008contraction} Jachymski, J.: The contraction principle for mappings on a metric space with a graph. Proceedings of the American Mathematical Society. \textbf{136} (4), 1359--1373 (2008)
	
	
	\bibitem{jachymski2007nonlinear} Jachymski, J., J{\'o}{\'z}wik, I.: Nonlinear contractive conditions: a comparison and related problems. Banach center publications. \textbf{1} (77), 123--146 (2007)
	
	\bibitem{latif2019common} Latif, A., Nazir, T., Kutbi, M.A.: Common fixed point results for class of set-contraction mappings endowed with a directed graph. Revista de la Real Academia de Ciencias Exactas, F{\'\i}sicas y Naturales. Serie A. \textbf{113} (4), 3207--3222 (2019)
	
	\bibitem{marin2011functions} Mar{\'\i}n, J., Romaguera, S., Tirado, P.: Functions on quasimetric spaces and fixed points for multivalued maps. Fixed Point Theory and Applications. \textbf{2011}, 1--10 (2011)
	
	\bibitem{nadler1969multi} Nadler Jr., Sam B.: Multi-valued contraction mappings. \textbf{30} (2), 475--488 (1969)
	
	\bibitem{karapinar2013dislocated} Karap{\i}nar, E., Salimi, P.: Dislocated metric space to metric spaces with some fixed point theorems. Fixed point theory and applications. \textbf{2013} (1), 1--19 (2013)
	
	\bibitem{kumari2015completion} Kumari, P.S., Ramabhadra S.I., Rao, J.M., Panthi, D.: Completion of a dislocated metric space. Abstract and Applied Analysis. \textbf{2015}, (2015)
	
	\bibitem{nicolae2011fixed} Nicolae, A., O'Regan, D., Petru{\c{s}}el, A.: Fixed point theorems for singlevalued and multivalued generalized contractions in metric spaces endowed with a graph. J. Georgian Math. Soc. \textbf{18}, 307--327 (2011)
	
	\bibitem{nieto2005contractive} Nieto, J.J., Rodr{\'\i}guez-L{\'o}pez, R.: Contractive mapping theorems in partially ordered sets and applications to ordinary differential equations. Order. \textbf{22} (3), 223--239 (2005)
	
	
	\bibitem{nieto2007fixed} Nieto, J.J., Pouso, R.L., Rodr{\'\i}guez-L{\'o}pez, R.: Fixed point theorems in ordered abstract spaces. Proceedings of the American Mathematical Society. 2505--2517 (2007)
	
	\bibitem{nieto2007existence} Nieto, J.J.,  Rodr{\'\i}guez-L{\'o}pez, R.: Existence and uniqueness of fixed point in partially ordered sets and applications to ordinary differential equations. Acta Mathematica Sinica, English Series. \textbf{23} (12), 2205--2212 (2007)
	
	\bibitem{pasicki2015dislocated} Pasicki, L.: Dislocated metric and fixed point theorems. Fixed Point Theory and Applications. \textbf{2015} (1), 1--14 (2015)
	
	
	\bibitem{ran2004fixed} Ran, A., CM., Reurings, M. CB.: A fixed point theorem in partially ordered sets and some applications to matrix equations. proceedings of the American Mathematical Society. \textbf{2004}: 1435--1443 (2004)
	
	\bibitem{radenovic2010generalized} Radenovi{\'c}, S., Kadelburg, Z.: Generalized weak contractions in partially ordered metric spaces. Computers \& Mathematics with Applications. \textbf{60} (6), 1776--1783 (2010)

	
	\bibitem{rasham2018multivalued} Rasham, T., Shoaib, A., Alamri, B.A.S., Arshad, M.: Multivalued fixed point results for new generalized-dominated contractive mappings on dislocated metric Space with application. Journal of Function Spaces. \textbf{2018}, (2018)
	
	
	\bibitem{wadkar2017some} Wadkar, B.R., Bhardwaj, R., Singh, B.: Some fixed point theorems in dislocated metric space. Global Journal of Pure and Applied Mathematics. \textbf{13} (6), 2089--2110 (2017)
	
\end{thebibliography}
\end{document}